\documentclass{article}[12pt]

\usepackage{amsmath, amsthm, amssymb}
\usepackage{enumerate}
\usepackage{authblk}
\usepackage[ruled,vlined,linesnumbered,resetcount,algosection]{algorithm2e}
\usepackage{color}
\usepackage{tikz}
\pgfdeclarelayer{edgelayer}
\pgfdeclarelayer{nodelayer}
\pgfsetlayers{edgelayer,nodelayer,main}
\usetikzlibrary {decorations.pathmorphing, decorations.pathreplacing, decorations.shapes, patterns}
\usepackage[a4paper, total={6in, 8in}]{geometry}

\newtheorem{theorem}{Theorem}[section]
\newtheorem{definition}[theorem]{Definition}
\newtheorem{remark}[theorem]{Remark}
\newtheorem{lemma}[theorem]{Lemma}
\newtheorem{proposition}[theorem]{Proposition}
\newtheorem{corollary}[theorem]{Corollary}
\newtheorem{problem}[theorem]{Problem}

\theoremstyle{definition}
\newtheorem{case}{Case}
\newtheorem{subcase}{Subcase}[case]


\DeclareMathOperator{\diam}{diam}
\DeclareMathOperator{\rad}{rad}
\DeclareMathOperator{\e}{ecc}
\DeclareMathOperator{\cen}{center}
\DeclareMathOperator{\bor}{bor}
\DeclareMathOperator{\reach}{\mathfrak{R}}

\newcommand{\svSpan}[1]{\sigma^{\boxtimes}_V(#1)}
\newcommand{\seSpan}[1]{\sigma^{\boxtimes}_E(#1)}

\begin{document}
\title{The strong vertex span of trees}
\author{
Mateja Grašič$^{a,b}$\thanks{mateja.grasic@um.si}\and Chris Mouron$^{c}$\thanks{mouronc@rhodes.edu} \and  Andrej Taranenko$^{a,b}$\thanks{andrej.taranenko@um.si}
}

\date{\today}

\maketitle

\begin{center}
$^a$ Faculty of Natural Sciences and Mathematics, University of Maribor, Slovenia \\
$^b$ Institute of Mathematics, Physics and Mechanics, Ljubljana, Slovenia\\
$^c$  Department of Mathematics and Statistics, Rhodes College, Memphis, TN, 38112 USA
\end{center}

\begin{abstract}
The strong vertex (edge) span of a given graph $G$ is the maximum distance that two players can maintain at all times while visiting all vertices (edges) of $G$ and moving either to an adjacent vertex or staying in the current position independently of each other. We introduce the notions of switching walks and triod size of a tree, which are used to determine the strong vertex and the strong edge span of an arbitrary tree. The obtained results are used in an algorithm that computes the strong vertex (edge) span of the input tree in linear time.

\bigskip
\noindent Keywords: strong vertex span, strong edge span, trees, algorithm.

\bigskip \noindent MSC2020: 05C05, 05C85.
\end{abstract}

\section{Introduction and basic definitions}
Inspired by Lelek's notion of the span of a continuum introduced in 1964 \cite{Lelek}, Banič and Taranenko \cite{BaTa23} translated this idea to graph theory and  defined spans of a graph. In \cite{BaTa23}, several variants of spans were presented. In this paper we focus on one of these, namely the strong vertex span of a given graph. The strong vertex span of a graph can be explained in the language of games on graphs as follows: two players, say Alice and Bob, are moving through a given graph and wish to traverse all vertices of the graph and are allowed to move independently of each other either to an adjacent vertex or stay at the same vertex. At each point in time, the distance between the two players is measured. The minimum distance obtained over all moments in time is the \emph{safety distance} Alice and Bob were able to maintain. The question asked is, given a connected graph what is the maximum possible safety distance the two players are able to maintain at each point in time. 

In the seminal paper by Banič and Taranenko \cite{BaTa23},  notions of spans of a graph were formally introduced. All variants of spans were characterised with respect to subgraphs of graph products and it was shown that the value of a chosen span can be obtained in polynomial time (a polynomial of degree 4 with respect to the number of vertices of the given graph). Also, 0-span graphs were characterised for each span variant. Erceg et al. \cite{Erceg23} soon after published further results on spans of a graph, where the relation between different vertex span variants was studied, and spans for specific families of graphs were determined. In particular, related to this paper, the strong vertex span of perfect binary trees was determined. Continuing the work on spans, Šubašić and Vojković \cite{SuVo24} determined the values of all variants of vertex spans for multilayered graphs and their sub-classes, i.e. multilayered cycles and multilayered paths. 

In this paper we focus our research on the strong vertex span of trees and proceed as follows:
\begin{enumerate}
    \item In the remainder of this section we present basic definitions and the results needed throughout the paper.
    \item In Section \ref{sec:svTrees} we introduce the notion of the triod size of a tree and use it to determine the strong vertex span of an arbitrary tree.
    \item We use the previously obtained results in Section \ref{sec:treesAlg} to present a linear time algorithm that computes the strong vertex span of an arbitrary tree.
\end{enumerate}     

Let $G$ be a connected graph with the vertex set $V(G)$, the edge set $E(G)$ and $v$ a vertex of $G$. The \emph{eccentricity} of the vertex $v$, denoted by $\e(v)$, is the maximum distance from $v$ to any vertex of $G$. That is, $\e(v)=\max\{d_G(v,u) \mid u \in V(G)\}.$ The \emph{radius} of $G$ is $\rad(G)=\min\{\e(v) \mid v \in V(G)\}$ and the \emph{diameter} of $G$ is $\diam(G)=\max\{\e(v) \mid v \in V(G)\}.$ 

Let $G$ and $H$ be any graphs. A function $f: V(G) \rightarrow V(H)$ is a \emph{weak homomorphism from $G$ to $H$} if  for all $u,v\in V(G)$, $uv\in E(G)$ implies $f(u) f(v) \in E(H)$ or $f(u)=f(v)$. We will use the more common notation $f:G\rightarrow H$ to say that $f: V(G) \rightarrow V(H)$ is a weak homomorphism. A weak homomorphism $f: G \rightarrow H$ is \emph{surjective} if $f(V(G)) = V(H)$. Let $f,g: G \rightarrow H$ be weak homomorphisms. The \emph{distance from $f$ to $g$} is defined as $m_G(f,g) = \min \{ d_H(f(u), g(u))  \mid u \in V(G) \}$.  If $f,g: G \rightarrow H$ are surjective weak homomorphisms and $G$ is connected, then $m_G(f,g)\leq \rad(H)$ \cite{BaTa23}.

We are now ready to state the definitions of the strong vertex span as defined in \cite{BaTa23}. We follow the notations as presented in the original paper.

\begin{definition}[\cite{BaTa23}]\label{def:strongSpans}
Let $H$ be a connected graph. Define the \emph{strong vertex span} of the graph $H$, denoted by $\svSpan{H}$, as
\[
     \svSpan{H} = \max \{ m_P(f,g)  \mid f,g: P \rightarrow H \text{ are surjective weak homomorphisms and $P$ is a path} \}. 
\]
\end{definition}

The usage of (surjective) weak homomorphisms can be explained as follows. Walks on graphs parameterised by time can be presented by paths, where two adjacent vertices represent consecutive points in time. A mapping from a path to the given graph defined by a weak homomorphism represents one walk on the given graph. A weak homomorphism can map two adjacent vertices to either the same vertex, meaning that the player did not move at this point in time, or to two adjacent vertices on the graph, this means the player moved to an adjacent vertex. The condition that we consider only surjective weak homomorphisms implies that the walk presented by the weak homomorphism is a walk through all vertices of the graph. So two surjective weak  homomorphisms $f$ and $g$ in Definition \ref{def:strongSpans} represent a pair of walks through all vertices of the graph, one for each player. Clearly, the value $m_P(f,g)$ equals to the minimum of the distances between the players over all points in time. Considering all possible valid walks, we obtain the value of the strong vertex span. Throughout the paper we will use both the language of walks on graphs or the formal notions of weak homomorphisms to study the strong vertex span of a tree. Both are necessary, as it turns out some things are more easily proved in one language, others in the other.

Let $G$ be a connected graph and $v\in V(G)$ be a vertex. We define a {\it component} $C(v)$ of $G-\{v\}$ to be a maximal connected subgraph of $G-\{v\}$. Then we define the {\it border} of $C(v)$, denoted by  $\bor(C(v))$, to be the vertices of $C(v)$ that are adjacent to $v$ in $G$, and the {\it closure} of $C(v)$, denoted by $\overline{C(v)}$, to be the subgraph of $G$ induced by the vertex set $V(C(v))\cup\{v\}$. 

\begin{remark}
For the reasons of brevity we in some parts abuse the notation and write for a graph $G$ that $v\in G$, where we mean $v\in V(G)$.    
\end{remark}
 
The {\it reach} of $C(v)$ is the eccentricity of $v$ in $\overline{C(v)}$, that is $\reach(C(v))=\max\{d(v,u) \mid u\in \overline{C(v)}\}$. 

\section{Strong vertex span of a tree}\label{sec:svTrees}

    Throughout the paper if not stated otherwise we use the following notation of vertices of a path. If $P$ is a path, then we denote the vertices of $P$ with the numbers $\{1, 2, \ldots\, |V(P)|\}$, where for each $i\in \{1, 2, \ldots\, |V(P)|-1\}$, $i(i+1) \in E(P)$. Any comparisons of vertices of a path with relations $\leq, \geq, <, >$ and therefore computations of $\min/\max$ as well as arithmetic operations are done on the labels of the vertices.

\begin{definition}\label{def:switch}
  Let $P$ be a path and $A:P\longrightarrow G$ and  $B:P\longrightarrow G$ be weak homomorphisms. We say that \emph{$B$ switches with $A$ at vertex $v\in V(G)$} if there exist $i,j \in V(P)$, such that $i<j$ and distinct components $C_{\alpha}(v), C_{\beta}(v), C_{\gamma}(v)$ such that $A(i)\in C_{\alpha}(v)$, $B(i)\in C_{\beta}(v)$, $A(j)\in C_{\gamma}(v)$, $B(j)=v$ and $B(t)\not=v$ for all $i\leq t< j$, therefore $B(t) \in C_{\beta}(v)$ for all $i\leq t < j$.  
\end{definition}

\begin{remark}
The notion of a switch is not a symmetric idea, $B$ switching with $A$ at $v$ is not the same as $A$ switching with $B$ at $v$. 
\end{remark}

To determine the strong vertex span of any tree we need some auxiliary results. In Lemma \ref{leg} we use notations from Definition \ref{def:switch}.

\begin{lemma}\label{leg} Let  $A:P\longrightarrow G$ and  $B:P\longrightarrow G$ be weak homomorphisms such that $B$  switches with $A$  at some vertex $v$. Then \[\min\{d(A(t),B(t))\mid t\in V(P)\} \leq \min\{\reach(C_{\beta}(v)),\reach(C_{\gamma}(v))\}.\]
\end{lemma}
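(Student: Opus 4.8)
The plan is to locate two specific moments of time at which the distance between the two walks is bounded by $\reach(C_\beta(v))$ and by $\reach(C_\gamma(v))$ respectively; since the left-hand side of the claimed inequality is the minimum over \emph{all} times, it will then be at most each of these two reaches, and hence at most their minimum. Write $R_\beta = \reach(C_\beta(v))$ and $R_\gamma = \reach(C_\gamma(v))$, and recall that $\reach(C(v))$ is the eccentricity of $v$ in $\overline{C(v)}$, so every vertex $u \in C_\beta(v)$ satisfies $d(v,u) \le R_\beta$ and every vertex $u \in C_\gamma(v)$ satisfies $d(v,u) \le R_\gamma$.

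First I would show that the walk $A$ must pass through $v$ strictly between times $i$ and $j$. Since $A(i) \in C_\alpha(v)$ and $A(j) \in C_\gamma(v)$ with $\alpha \ne \gamma$, the two images lie in distinct components of $G-\{v\}$. The sequence $A(i), A(i+1), \ldots, A(j)$ is a walk in $G$, because consecutive images are equal or adjacent (the vertices $t,t+1$ of $P$ are adjacent and $A$ is a weak homomorphism). If none of these images equalled $v$, this sequence would be a walk in $G-\{v\}$ joining $A(i)$ to $A(j)$, placing them in the same component and contradicting $\alpha \ne \gamma$. Hence $A(s) = v$ for some $s$, and since $A(i) \ne v \ne A(j)$ we get $i < s < j$.

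Then I would evaluate the distance at the two chosen times. At time $s$ the inequality $i \le s < j$ holds, so by the defining property of the switch $B(s) \in C_\beta(v)$; therefore $d(A(s), B(s)) = d(v, B(s)) \le R_\beta$. At time $j$ we have $B(j) = v$ and $A(j) \in C_\gamma(v)$, so $d(A(j), B(j)) = d(v, A(j)) \le R_\gamma$. Consequently $\min\{d(A(t), B(t)) \mid t \in V(P)\}$, being at most both $d(A(s), B(s))$ and $d(A(j), B(j))$, is bounded above by each of $R_\beta$ and $R_\gamma$, and hence by $\min\{R_\beta, R_\gamma\}$, as required.

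The only genuinely non-routine step is the claim that $A$ visits $v$ between times $i$ and $j$; everything else follows immediately from the definitions of reach and of a switch. That step is a standard cut-vertex argument, so I do not anticipate a real obstacle — the essential content of the lemma is simply the convenient observation that two distinct witness times, the crossing time $s$ for $A$ and the arrival time $j$ of $B$ at $v$, simultaneously certify the two reach bounds.
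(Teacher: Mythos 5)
Your proof is correct and follows essentially the same route as the paper's: both locate a time $s$ with $i<s<j$ at which $A(s)=v$ (forced because $A(i)$ and $A(j)$ lie in distinct components of $G-\{v\}$), bound $d(A(s),B(s))$ by $\reach(C_\beta(v))$ using $B(s)\in C_\beta(v)$, and bound $d(A(j),B(j))$ by $\reach(C_\gamma(v))$ using $B(j)=v$. The only difference is that you spell out the cut-vertex argument that the paper leaves implicit.
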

\begin{proof} Let $i,j, C_{\beta}(v)$ and $C_{\gamma}(v)$ be as in Definition \ref{def:switch}. Since $A(j)\in C_{\gamma}(v)$, there exists $i < t < j$ such that $A(t)=v$. Since $B(t)\in C_{\beta}(v)$, it follows that $d(A(t),B(t))\leq \reach(C_{\beta}(v))$. Also, since $A(j)\in C_{\gamma}(V)$, it follows that $d(A(j),B(j))= d(A(j),v)\leq  \reach(C_{\gamma}(v))$.
\end{proof}

\begin{lemma}\label{switch}
	Let $T$ be a tree and $A:P\longrightarrow T$ and $B:P\longrightarrow T$ weak homomorphisms with $d(A(t),B(t))\geq 2$ for all $t \in V(P)$. If there exist $i, j \in V(P)$, where $i<j$, such that
	\begin{enumerate}
        \item For all  $i \leq t < j$, $B(t)$ is not in the path between $A(i)$ and $A(t)$
        \item $B(j)$ is in the path between $A(i)$ and $A(j)$, 
	\end{enumerate}
	then $B$ switches with $A$ at the vertex $B(j)$. 
\end{lemma}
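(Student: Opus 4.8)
The plan is to exhibit explicit witnesses for a switch at $v:=B(j)$: two indices $i'<j'$ and three pairwise distinct components of $T-\{v\}$ meeting every clause of Definition~\ref{def:switch}. Throughout I would write that a vertex \emph{lies on the path between $x$ and $y$} to mean it belongs to the unique $x$–$y$ path of the tree, and I would lean repeatedly on the elementary tree fact that $v$ has \emph{exactly one} neighbour in each component of $T-\{v\}$. First I would fix the two ``outer'' components. Since $d(A(j),B(j))\ge 2$ we have $A(j)\ne v$, so $A(j)$ lies in a well-defined component $C_\gamma(v)$. Assuming $A(i)\ne v$ (see the last paragraph), $A(i)$ lies in a component $C_\alpha(v)$; and because $v=B(j)$ lies on the path between $A(i)$ and $A(j)$ by hypothesis~2 while $v\notin\{A(i),A(j)\}$, the vertex $v$ separates $A(i)$ from $A(j)$, whence $C_\alpha(v)\ne C_\gamma(v)$.

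Next comes what I expect to be the \textbf{decisive step}: locating the third component from which $B$ reaches $v$. I would first argue $B(j-1)\ne v$. Otherwise hypothesis~1 at $t=j-1$ says $v=B(j-1)$ does not lie on the path between $A(i)$ and $A(j-1)$, which forces $A(j-1)\in C_\alpha(v)$; but $A(j)\in C_\gamma(v)$ and $A(j-1)$ is equal or adjacent to $A(j)$, impossible across the cut vertex $v$. Hence $B(j-1)$ is a genuine neighbour of $v$ and lies in some component $C_\beta(v)$. To see this is a \emph{third} component, suppose $B(j-1)\in C_\gamma(v)$; being adjacent to $v$ it must be the unique neighbour of $v$ in $C_\gamma(v)$, which lies on the $v$–$A(j-1)$ subpath, hence on the path between $A(i)$ and $A(j-1)$. (Here $A(j-1)\in C_\gamma(v)$, since $A(j-1)$ is equal or adjacent to $A(j)$ and $A(j-1)\ne v$: a neighbour of $v$, namely $B(j-1)$, is at distance $1$ from $v$, so $d\ge 2$ rules out $A(j-1)=v$.) This contradicts hypothesis~1 at $t=j-1$. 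The case $B(j-1)\in C_\alpha(v)$ is symmetric, using the $A(i)$–$v$ subpath. Thus $C_\beta(v)$ is distinct from both $C_\alpha(v)$ and $C_\gamma(v)$.

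It then remains to package this into the exact shape of Definition~\ref{def:switch} by choosing $i'$. Set $j':=j$. If $B(t)\ne v$ for all $i\le t<j$, put $i':=i$; otherwise let $s$ be the largest index with $i\le s<j$ and $B(s)=v$, and put $i':=s+1$. Since $B(j-1)\ne v$ we have $s\le j-2$, so in either case $i\le i'\le j-1<j'$ and $B(t)\ne v$ for all $i'\le t<j'$; consequently $B(i'),\dots,B(j-1)$ all lie in $C_\beta(v)$, giving $B(i')\in C_\beta(v)$. For the required $A(i')\in C_\alpha(v)$: if $i'=i$ this is the definition of $C_\alpha(v)$, and if $i'=s+1$ then hypothesis~1 at $t=s$ yields $A(s)\in C_\alpha(v)$, while $A(s+1)\ne v$ (otherwise $B(s)=v$ and $B(s+1)$ adjacent to $v$ would force $d(A(s+1),B(s+1))\le 1$), so $A(i')=A(s+1)\in C_\alpha(v)$ as well. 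Together with $A(j')=A(j)\in C_\gamma(v)$ and $B(j')=v$, every clause of Definition~\ref{def:switch} is verified.

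The one point demanding genuine care—and the real obstacle—is the degenerate possibility $A(i)=v$, on which the separation $C_\alpha(v)\ne C_\gamma(v)$, and hence the whole construction, rests. When $A(i)=v$, hypothesis~2 holds vacuously, since $v$ is then an endpoint of the path between $A(i)$ and $A(j)$; and on a path $T$ one can satisfy all stated hypotheses while $B(j)$ has only two incident components, so no three-component switch exists. I would therefore either invoke the context in which Lemma~\ref{switch} is applied to guarantee $A(i)\ne B(j)$, or record $A(i)\ne B(j)$ as a standing assumption; granting $A(i)\ne v$, the argument above goes through.
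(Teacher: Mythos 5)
Your construction is essentially the paper's own proof: the same witness vertex $v=B(j)$, the same three components ($C_\alpha(v)\ni A(i)$, $C_\beta(v)\ni B(j-1)$, $C_\gamma(v)\ni A(j)$), the same separation arguments for their distinctness, and only a bookkeeping difference in the choice of starting index (you take the successor of the last time $B$ visits $v$, while the paper takes the last time $A$ lies in $C_\alpha(v)$ and then proves $B\neq v$ on the resulting interval; both yield valid witnesses for Definition \ref{def:switch}).

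Where you go beyond the paper is the degenerate case $A(i)=B(j)$, and you are right that it is a genuine gap: the lemma as stated is false. Concretely, let $T$ be the path $1-2-\cdots-7$, let $P$ have vertices $1,2,3,4$, and let $A$ map them to $4,3,2,2$ and $B$ to $6,6,5,4$, with $i=1$, $j=4$. Both maps are weak homomorphisms, $d(A(t),B(t))\ge 2$ for all $t$, condition 1 holds ($6\notin\{4\}$, $6\notin\{3,4\}$, $5\notin\{2,3,4\}$), and condition 2 holds since $B(4)=4=A(1)$ is an endpoint of the path $\{2,3,4\}$; yet $\deg(4)=2$, so $T-\{4\}$ has only two components and no switch at $4$ can exist. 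The paper's proof silently assumes $A(i)\neq B(j)$ the moment it declares $C_\alpha(v)$ to be ``the component of $T-\{v\}$ that contains $A(i)$,'' so your added hypothesis is exactly the needed repair, and your derivation of $B(j-1)\neq v$ (which the paper also leaves implicit when defining $C_\beta(v)$) closes the other small hole. One caution about your suggestion to ``invoke the context'': in the proof of Theorem \ref{thm:drevoZgMeja} the paper justifies the existence of $\widehat{t}$ precisely by the fact that $B(t)=A(1)$ for some $t$, so the degenerate case $B(\widehat{t})=A(1)$ is a live possibility in that application; adding $A(i)\neq B(j)$ to the lemma is the cleaner fix, but it shifts to the theorem's proof the burden of handling (or excluding) that case separately.
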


\begin{proof}
    We will use the following notations: $v=B(j)$, $C_\alpha(v)$ is the component of $T - \{v\}$ that contains $A(i)$, $C_\beta(v)$ is the component of $T - \{v\}$ that contains $B(j-1)$ and $C_\gamma(v)$ is the component of $T - \{v\}$ that contains $A(j)$.

    \medskip
    
    {\bf Claim:} $C_{\alpha}(v)$, $C_{\beta}(v)$ and $C_{\gamma}(v)$ are all distinct components of $T-\{v\}$.

    \medskip

    Note, since $d(A(j), B(j))\geq 2$, then $A({j-1})$ belongs to $C_{\gamma}(v)$. Working towards a contradiction suppose that $B({j-1}) \in C_{\alpha}(v)$. This implies that $B({j-1})\in \bor(C_{\alpha}(v))$ since $B({j-1})$ is adjacent to $v$. So, $B({j-1})$ is in the path from $A(i)$ to $A({j-1})$ which contradicts condition 1. Hence, $C_{\alpha}(v) \not= C_{\beta}(v)$. 
    Clearly, $A(j)\not\in C_{\alpha}(v)$, otherwise, the path from $A(i)$ to $A(j)$ would be in $C_{\alpha}(v)$ and hence not contain $v$, contradicting condition 2, therefore $C_{\alpha}(v) \not= C_{\gamma}(v)$.
    Finally, if $A(j)\in C_{\beta}(v)$, then it follows from $d(A(j),B(j))\geq 2$ that $A({j-1})\in C_{\beta}(v)$. Also, since $B({j-1})\in C_{\beta}(v)$ and it is adjacent to $v$, it follows that  $B({j-1})\in \bor(C_{\beta}(v))$. So, $B({j-1})$ is in the path from $A(i)$ to $A({j-1})$ which contradicts condition 1. Thus, $C_{\beta}(v) \not= C_{\gamma}(v)$. 

    \medskip

    Let $t' = \max\{t \mid i \leq t< j \text{ and } A(t) \in C_\alpha(v)\}$. Clearly, $t'$ exists since $A(i)\in C_\alpha(v)$ and $A(j)\not \in C_\alpha(v)$. 

    \medskip
    
    {\bf Claim:}  For all $t' \leq t < j$ it holds true that $B(t)\not = v$.

    \medskip

    From the choice of $t'$ we have that $A(t) \not \in C_\alpha(v)$ for any $t'+1 \leq t < j$ and $A(t')\in \bor(C_\alpha(v))$, therefore for all $t' \leq t < j$ any path from $A(i)$ to $A(t)$ contains $v$. Using condition 1 and that $d(A(t'),B(t'))\geq 2$ we have that $B(t)\not = v$ for any $t'\leq t < j$. 

    \medskip

    {\bf Claim:}  $B(t') \in C_{\beta}(v)$.

    \medskip
    
    Working towards a contradiction suppose, that $B(t')\not \in C_\beta(v)$. Clearly, the path from $B(t') \not \in C_\beta(v)$ to $B({j-1})\in C_\beta(v)$ contains $v$ and since $t' < j$, there exists $t' \leq t < j$ such that $B(t) = v$, a contradiction to the previous claim. 
    
    \medskip

    We now have, there exist $t'$ and $j$ with $t'<j$, such that  if $v = B(j)$, then $C_{\alpha}(v)$, $C_{\beta}(v)$ and $C_{\gamma}(v)$ are distinct components, $A(t') \in C_{\alpha}(v)$, $B(t') \in C_{\beta}(v)$, $A(j) \in C_{\gamma}(v)$ and for all $t' \leq t < j$ it holds true that $B(t)\not = v$. By Definition \ref{def:switch}, $B$ switches with $A$ at $v$.
\end{proof}

Throughout the rest of the paper we will also need the notion of the triod size of a vertex and the triod size of a tree.

\begin{definition}\label{def:triod-max-eta}
Let $T$ be a tree and $v$ be a vertex of $T$. Let $C_{1}(v), C_{2}(v),...,C_{deg(v)}(v)$ be the components of $T-\{v\}$ denoted such that $\reach(C_i(v))\geq \reach(C_{i+1}(v))$ for each $i \in \{1, 2, \ldots, \deg(v)-1\}$. Define \emph{triod size of $v$} as \[\eta(v)= \begin{cases}
   \reach(C_3(v)), & \text{if } \deg(v)\geq 3 \\
   0, & \text{if }\deg(v)\in\{0,1,2\}
\end{cases} \]
and \emph{triod size of $T$} as \[\mathfrak{H}(T)=\max\{\eta(v)\mid v\in V(T)\}.\]    
\end{definition}	

 \begin{theorem}\label{thm:drevoZgMeja}
		If $T$ is a tree such that $\svSpan{T} \geq 2$, then $\svSpan{T} \leq \mathfrak{H}(T) $.
	\end{theorem}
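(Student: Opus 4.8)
The plan is to take a pair of walks realizing the span, force a switch out of surjectivity, and then upgrade the two-branch bound of Lemma \ref{leg} to a three-branch bound by analysing which branch the pursuing walk starts in. First I would fix surjective weak homomorphisms $A,B\colon P\to T$ with $m_P(A,B)=\svSpan{T}=:s$; since $s\ge 2$, we have $d(A(t),B(t))\ge s\ge 2$ for all $t$, so Lemmas \ref{leg} and \ref{switch} are applicable throughout. To produce a switch I would use that $B$, being surjective, eventually occupies $A(1)$: letting $j$ be the first index for which $B(j)$ lies on the path between $A(1)$ and $A(j)$, minimality gives condition~1 and the choice of $j$ gives condition~2 of Lemma \ref{switch} with $i=1$, so $B$ switches with $A$ at $v:=B(j)$. (If the first such crossing happens to occur exactly at $A(1)=v$, one reselects the start index to a nearby one with $A(i)\ne v$; in a tree the unique border vertex of each component makes this a routine adjustment.) Writing $C_\alpha,C_\beta,C_\gamma$ for the three distinct components of $T-\{v\}$ of Definition \ref{def:switch}, Lemma \ref{leg} yields $s\le\min\{\reach(C_\beta),\reach(C_\gamma)\}$, and since there are three distinct components, $\deg(v)\ge 3$, so $\eta(v)=\reach(C_3(v))$ is defined.

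Next I would split on the reach of the origin branch $C_\alpha$ containing $A(i)$. If $\reach(C_\alpha)\ge\min\{\reach(C_\beta),\reach(C_\gamma)\}$, then each of $C_\alpha,C_\beta,C_\gamma$ has reach at least $\min\{\reach(C_\beta),\reach(C_\gamma)\}\ge s$, so at least three components of $T-\{v\}$ have reach $\ge s$; hence $\reach(C_3(v))\ge s$, giving $s\le\eta(v)\le\mathfrak H(T)$ as required. The genuine work lies in the complementary case $\reach(C_\alpha)<\min\{\reach(C_\beta),\reach(C_\gamma)\}$, where $A$ begins in a branch strictly shorter than the two branches detected by the switch, so a single switch certifies only that $C_\beta$ and $C_\gamma$ are deep and I must still exhibit a third branch at $v$ of reach $\ge s$.

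For that complementary case my plan is to invoke surjectivity once more: both players must descend to the deepest leaves of $C_\beta$ and of $C_\gamma$ and must also visit the short branch $C_\alpha$, all while staying at distance $\ge s$. Following $B$ as it crosses $v$ out of $C_\beta$ and $A$ as it is later forced to vacate $C_\gamma$, I would apply Lemma \ref{switch} a second time, with the roles of $A$ and $B$ possibly interchanged, to obtain a complementary switch at $v$ at least one of whose two certified branches is different from $C_\beta$ and $C_\gamma$. Since that branch also has reach $\ge s$ by Lemma \ref{leg}, together with $C_\beta,C_\gamma$ it furnishes three distinct branches of reach $\ge s$, and the same counting step forces $\eta(v)=\reach(C_3(v))\ge s$, hence $s\le\mathfrak H(T)$.

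I expect this complementary switch to be the main obstacle. Two things make it delicate: first, the relative order in which $A$ and $B$ reach the deepest leaves of $C_\beta$ and $C_\gamma$ must be handled by cases in order to set the indices so that Lemma \ref{switch} literally applies; and second, because $\reach$ records height from $v$ rather than diameter, knowing only that the two walks are far apart does not by itself determine which component each walk occupies, so the component bookkeeping must exploit the bound $d(A(t),B(t))\ge s$ carefully at each step. Once a second deep branch is secured, the passage from \emph{three branches of reach $\ge s$ at $v$} to $s\le\reach(C_3(v))=\eta(v)\le\mathfrak H(T)$ is immediate.
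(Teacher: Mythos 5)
Your opening moves are sound and match the paper's: fixing optimal walks, using surjectivity to produce a first index $j$ where $B(j)$ blocks the $A(1)$--$A(j)$ path, invoking Lemma \ref{switch} to get a switch at $v=B(j)$, and reading off $s\le\min\{\reach(C_\beta),\reach(C_\gamma)\}$ from Lemma \ref{leg}. Your Case A is also correct. The genuine gap is Case B: you claim that a second application of Lemma \ref{switch} (with roles of $A$ and $B$ possibly interchanged) yields another switch \emph{at the same vertex} $v$ certifying a branch other than $C_\beta,C_\gamma$, and hence that $\eta(v)\ge s$. This is false: the first-switch vertex need not have three deep branches at all, so no amount of further switching at $v$ can rescue the argument. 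Concretely, let $T$ consist of adjacent vertices $u_1,u_2$, with legs $L_1$ (length $10$) and $L_2$ (length $3$) attached at $u_1$, and legs $L_3,L_4$ (both length $10$) attached at $u_2$. Then $\eta(u_1)=3$, $\eta(u_2)=10$, so $\mathfrak{H}(T)=10$ and $\svSpan{T}=10=:s$. Realize $s$ as follows: Alice starts at the leaf of $L_2$, Bob at the leaf of $L_1$; Alice walks to the leaf of $L_3$ while Bob idles; Bob then walks to $u_1$, sweeps $L_2$, returns to $u_1$, and proceeds to the leaf of $L_4$; Alice then sweeps $L_1$, Bob sweeps $L_3$, and Alice finishes with $L_4$. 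One checks the distance never drops below $10$ and both walks are surjective. The first index $j$ at which $B(j)$ lies on the path between $A(1)$ and $A(j)$ is exactly the moment Bob reaches $u_1$, so $v=u_1$, with $C_\alpha=L_2$ and $\{C_\beta,C_\gamma\}=\{L_1,\ \text{the } u_2\text{-side}\}$ --- your Case B. But $u_1$ has only these three branches, and $\reach(L_2)=3<s$; indeed, by Lemma \ref{leg} no switch at $u_1$, by either player in either role, can ever certify $L_2$, since that would force $\min_t d(A(t),B(t))\le 3$. So there is no third deep branch at $v$, and the inequality $s\le\eta(v)$ you aim for simply fails at this vertex.

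This is precisely why the paper's proof does not stay at $\widehat{v}$. After the first switch it selects the deepest vertex $\widehat{B}$ of $C_\beta(\widehat{v})$, lets $\widetilde{t}$ be the first time $A(\widetilde{t})$ lies on the path between $\widehat{B}$ and $B(\widetilde{t})$, and relocates the whole analysis to $\widetilde{v}=A(\widetilde{t})$, which can lie in $C_\beta(\widehat{v})$ or $C_\gamma(\widehat{v})$ (in the example above, the deep triod sits at $u_2$, not at $u_1$). Your instinct to apply Lemma \ref{switch} a second time with the roles of $A$ and $B$ interchanged is the right one --- the paper does exactly that in its Case 3 --- but it must be applied at the relocated vertex $\widetilde{v}$, and the resulting bound $\min_t d(A(t),B(t))\le\eta(\widetilde{v})$ rests on containment relations such as $C_\beta(\widehat{v})\subset C_{\widehat{\alpha}}(\widetilde{v})$, which transfer the known depth of branches at $\widehat{v}$ into depth of branches at $\widetilde{v}$. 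That relocation step, together with the case analysis over the possible locations of $\widetilde{v}$, is the real content of the paper's proof and is the idea missing from your outline.
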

	\begin{proof}
        Note, since $\svSpan{T} \geq 2$, it follows that $T$ is not a path \cite{BaTa23} and it therefore contains a vertex of degree at least 3.
        
		Suppose on the contrary that there exists a path $P$ on $n$ vertices and surjective weak homomorphisms $A:P\longrightarrow T$ and $B:P\longrightarrow T$ such that $d(A(t),B(t))\geq \mathfrak{H}(T)+1$ for all $t\in V(P)$. 

        Define $\widehat{t}$ to be  
		\[\widehat{t}=\min\{t \in V(P) \mid B(t) \mbox{ is on the path between } A(1) \mbox{ and } A(t)\] \[\mbox{ or } A(t) \mbox{ is on the path between } B(1) \mbox{ and } B(t)\}.\]
		
		Without loss of generality assume $B(\widehat{t})$ is on the path between $A(1)$  and  $A(\widehat{t})$. Due to surjectivity such a $\widehat{t}$ exists because $B(t)=A(1)$ for some $t\in V(P)$. By Lemma \ref{switch} we have that B switches with A at $\widehat{v}=B(\widehat{t})$.

        Following the notation in the proof of Lemma \ref{switch}, let $C_{\alpha}(\widehat{v}), C_{\beta}(\widehat{v}), C_{\gamma}(\widehat{v})$ be the distinct components of $T-\{\widehat{v}\}$ such that $A(1)\in C_{\alpha}(\widehat{v})$, $B(\widehat{t}-1)\in C_{\beta}(\widehat{v})$, $A(\widehat{t})\in C_{\gamma}(\widehat{v})$. Also define \[t_0 = \max\{t \in V(P) \mid 1 \leq t < \widehat{t} \text{ and } A(t) \in C_\alpha(\widehat{v})\}.\] As stated as in the proof of Lemma \ref{switch}, $t_0$ exists since $A(1)\in C_\alpha(\widehat{v})$ and $B(t_0)\in C_\beta(\widehat{v})$. We now have, $A(t_0)\in C_{\alpha}(\widehat{v})$, $B(t_0)\in C_{\beta}(\widehat{v})$, $A(\widehat{t})\in C_{\gamma}(\widehat{v})$.
        
		Since $B$ switches with $A$ at $\widehat{v}$ it follows from Lemma \ref{leg} that 
        \begin{equation}\label{neenakost:BswitchAL21}
            \min\{d(A(t),B(t))\mid t\in V(P)\} \leq \min\{\reach(C_{\beta}(\widehat{v})),\reach(C_{\gamma}(\widehat{v}))\}.
        \end{equation}

        Using notations from Definition \ref{def:triod-max-eta} let $u \in V(T)$ be such that $\reach(C_3(u))=\mathfrak{H}(T)$. Hence for any $v\in V(G), \reach(C_3(u))\geq \reach(C_3(v))$. Using the starting hypothesis we have that for all $v \in V(T)$
        \begin{align*}
            \min\{d(A(t), B(t))\mid t\in V(P)\} & \geq \reach(C_3(u)) + 1 \\
                                              & \geq \reach(C_3(v)) + 1 \\
                                              & \geq \reach(C_k(v)) + 1, \text{ for all }k\geq 3.
        \end{align*}
        Hence using \eqref{neenakost:BswitchAL21}, for every  $C_{\delta}(\widehat{v})\not\in\{C_{\beta}(\widehat{v}), C_{\gamma}(\widehat{v})\}$ we have
        \begin{equation}\label{pogoj:zaProtislovjeT32}
            \min\{d(A(t),B(t))\mid t\in V(P)\} \geq \reach(C_{\delta}(\widehat{v}))+1. 
        \end{equation}

		Let $\widehat{B}$ be a vertex of $C_{\beta}(\widehat{v})$ such that $d(\widehat{B}, \widehat{v})=\reach(C_{\beta}(\widehat{v}))$. Let 
		\[\widetilde{t}=\min\{t\in V(P)\mid  A(t) \mbox{ is on the path between } \widehat{B} \mbox{ and } B(t)\}.\]
		
		
		Let $\widetilde{v}=A(\widetilde{t})$. We now consider all cases of possible locations for $\widetilde{v}$. 

        \begin{case}
		Assume $\widetilde{v}=\widehat{v}$. Since $\widehat{B}\in C_{\beta}(\widehat{v})$ and $\widetilde{v}$ is on the path between $\widehat{B}$ and $B(\widetilde{t})$, it follows that $B(\widetilde{t})\not\in  C_{\beta}(\widehat{v})$. If $B(\widetilde{t})\in C_{\delta}(\widehat{v})$, for any $C_{\delta}(\widehat{v})\not\in\{C_{\beta}(\widehat{v}), C_{\gamma}(\widehat{v})\}$, then $d(A(\widetilde{t}),B(\widetilde{t}))\leq \reach(C_{\delta}(\widehat{v}))$,  which is impossible due to condition \eqref{pogoj:zaProtislovjeT32}. Hence, $B(\widetilde{t})\in C_{\gamma}(\widehat{v})$. Let $t'=\max\{t \in V(P) \mid t<\widetilde{t} \text{ and } B(t)=\widehat{v}\}$. Thus,  $B(t)\in C_{\gamma}(\widehat{v})$ for all $t'<t\leq \widetilde{t}$. 
  
        Depending on the location of $A(t')$ we consider all the possibilities. 

        If $A(t')\in C_{\delta}(\widehat{v})$, for any $C_{\delta}(\widehat{v})\not\in\{C_{\beta}(\widehat{v}), C_{\gamma}(\widehat{v})\}$, then $d(A(t'),B(t'))\leq \reach(C_{\delta}(\widehat{v}))$,  which is impossible due to \eqref{pogoj:zaProtislovjeT32}.

        If $A(t')\in C_{\delta}(\widehat{v})$, for any $C_{\delta}(\widehat{v})\in\{C_{\beta}(\widehat{v}), C_{\gamma}(\widehat{v})\}$, then there exists $t''$ such that $t'<t''<\widetilde{t}$ and $A(t'')\in \bor(C_{\delta}(\widehat{v}))$. Since $B(t'')\in C_{\gamma}(\widehat{v})$, $A(t'')$ is on the path from $\widehat{B}$ to $B(t'')$ which contradicts the definition of $\widetilde{t}$. See Figure \ref{fig:case1} for a sketch with all the important components and vertices shown in the case when $A(t')\in C_{\beta}(\widehat{v})$.

        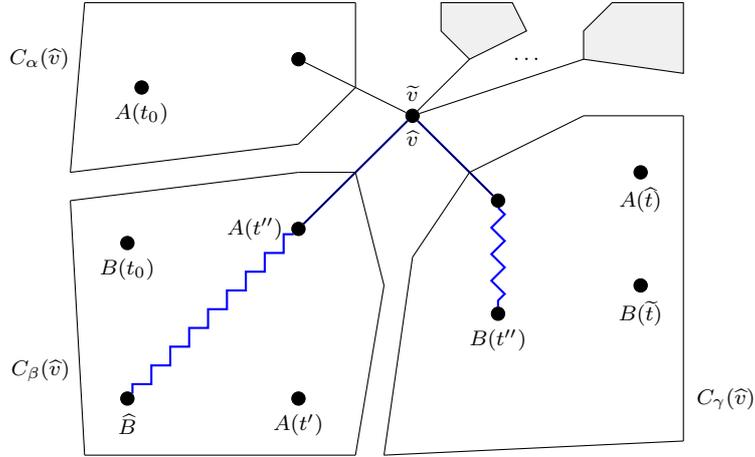
\begin{figure}[!ht]
            \centering
    \begin{tikzpicture}[scale=0.75]
            \footnotesize
\tikzstyle{rn}=[circle, fill=black,draw, inner sep=0pt, minimum size=5pt]
\tikzstyle{bobT}=[fill={rgb,255: red,255; green,128; blue,0}, draw=black, inner sep=0pt, shape=circle, minimum size=5pt]

\tikzstyle{dashed grey}=[-, draw={rgb,255: red,128; green,128; blue,128}, dashed]

	\begin{pgfonlayer}{nodelayer}
		\node (1) at (1, 1) {};
		\node (2) at (2, 1) {$\ldots$};
		\node (3) at (3, 1) {};
		\node (4) at (0.5, 1.5) {};
		\node (5) at (0.5, 2) {};
		\node (6) at (1.75, 2) {};
		\node (7) at (2, 1.5) {};
		\node (8) at (3, 1.5) {};
		\node (9) at (3.5, 2) {};
		\node (10) at (4.75, 2) {};
		\node (11) at (4.75, 0.75) {};
		
		\node (13) at (-1, 0.5) {};
		\node (14) at (-1, 2) {};
		\node (15) at (-5.75, 2) {};
		\node (16) at (-6, -1) {};
		\node (17) at (-2, -0.5) {};
		
		\node (19) at (-7, 1) {};
		
		\node (23) at (-1, -1) {};
		\node (24) at (-6, -1.5) {};
		\node (25) at (-2, -1) {};
		\node (26) at (-5.75, -6) {};
		\node (27) at (-1, -6) {};
		\node (28) at (-0.5, -3) {};
		
		\node (32) at (0, -2.5) {};
		\node (33) at (-0.5, -6) {};
		\node (34) at (4.75, -5.75) {};
		\node (35) at (4.75, 0) {};
		\node (36) at (3, 0) {};
		\node (37) at (1, -1) {};

        \node [style=rn] (18) at (-4.75, 0.5) [label=below:$A(t_0)$]{};
        \node (20) at (-6.54, 1) {$C_\alpha(\widehat{v})$};
        \node [style=rn] (29) at (-5, -5) [label=below:$\widehat{B}$]{};
		\node [style=rn] (30) at (-5, -2.25) [label=below:$B(t_0)$]{};
		\node [style=rn] (38) at (1.5, -1.5) {};
  		\node [style=rn] (btc) at (1.5, -3.5) [label=below:$B(t'')$]{};
        \node [style=rn] (21) at (-2, -2) [label=left:$A(t'')$]{};
        \node [style=rn] (atc) at (-2, -5) [label=below:$A(t')$]{};
        \node [style=rn] (12) at (-2, 1) {};
        \node [style=rn] (0) at (0, 0) [label=below:{$\widehat{v}$}, label=above:{$\widetilde{v}$}]{};
		\node [style=rn] (39) at (4, -1) [label=below:$A(\widehat{t})$]{};
		\node (40) at (-6.52, -4.5) {$C_\beta(\widehat{v})$};
		\node (41) at (5.5, -5) {$C_\gamma(\widehat{v})$};
          \node [style=rn] (btv) at (4, -3) [label=below:$B(\widetilde{t})$]{};
	\end{pgfonlayer}
	\begin{pgfonlayer}{edgelayer}
        \draw[decorate, thick, decoration=zigzag, color=blue] (29)--(21);
         \draw[decorate, thick, color=blue] (21) -- (0) -- (38);
         \draw[decorate, thick, decoration=zigzag, color=blue] (38)--(btc);
 
		\draw (0) to (1.center);
		\draw (0) to (3.center);
  
		\filldraw[fill={rgb,255: red,240; green,240; blue,240}] (1.center) -- (4.center) -- (5.center)-- (6.center)-- (7.center) --cycle;

		\filldraw[fill={rgb,255: red,240; green,240; blue,240}] (3.center) -- (8.center) -- (9.center) -- (10.center) -- (11.center) -- cycle;

		\draw (14.center) to (13.center);
		\draw (13.center) to (17.center);
		\draw (17.center) to (16.center);
		\draw (16.center) to (15.center);
		\draw (15.center) to (14.center);
		\draw (12) to (0);
		\draw (0) to (21);
		\draw (23.center) to (25.center);
		\draw (25.center) to (24.center);
		\draw (24.center) to (26.center);
		\draw (26.center) to (27.center);
		\draw (27.center) to (28.center);
		\draw (28.center) to (23.center);
		\draw (37.center) to (32.center);
		\draw (32.center) to (33.center);
		\draw (33.center) to (34.center);
		\draw (34.center) to (35.center);
		\draw (35.center) to (36.center);
		\draw (36.center) to (37.center);
		\draw (0) to (38);
	\end{pgfonlayer}
\end{tikzpicture}
            \caption{Case 1 when $A(t')\in C_{\beta}(\widehat{v})$.}
            \label{fig:case1}
        \end{figure}
        
		\end{case}

        \begin{case}
        Assume $\widetilde{v}\in C_{\delta}(\widehat{v})$, for any $C_{\delta}(\widehat{v})\not\in\{C_{\beta}(\widehat{v}), C_{\gamma}(\widehat{v})\}$. Then $B(\widetilde{t})\in C_{\delta}(\widehat{v})$ and hence $\widetilde{v}$ in on a path from $B(\widetilde{t})$ to $\widehat{v}$. Hence, $d(A(\widetilde{t}),B(\widetilde{t}))\leq \reach(C_{\delta}(\widehat{v}))$,  which is impossible due to \eqref{pogoj:zaProtislovjeT32}. See Figure \ref{fig:case2-alpha} for an example where $\widetilde{v}\in C_{\alpha}(\widehat{v})$.

        \begin{figure}[!ht]
            \centering
    \begin{tikzpicture}[scale=0.75]
            \footnotesize
\tikzstyle{rn}=[circle, fill=black,draw, inner sep=0pt, minimum size=5pt]
\tikzstyle{bobT}=[fill={rgb,255: red,255; green,128; blue,0}, draw=black, inner sep=0pt, shape=circle, minimum size=5pt]

\tikzstyle{dashed grey}=[-, draw={rgb,255: red,128; green,128; blue,128}, dashed]

	\begin{pgfonlayer}{nodelayer}
		\node (1) at (1, 1) {};
		\node (2) at (2, 1) {$\ldots$};
		\node (3) at (3, 1) {};
		\node (4) at (0.5, 1.5) {};
		\node (5) at (0.5, 2) {};
		\node (6) at (1.75, 2) {};
		\node (7) at (2, 1.5) {};
		\node (8) at (3, 1.5) {};
		\node (9) at (3.5, 2) {};
		\node (10) at (4.75, 2) {};
		\node (11) at (4.75, 0.75) {};
		
		\node (13) at (-1, 0.5) {};
		\node (14) at (-1, 2) {};
		\node (15) at (-5.75, 2) {};
		\node (16) at (-6, -1) {};
		\node (17) at (-2, -0.5) {};
		
		\node (19) at (-7, 1) {};
		
		\node (23) at (-1, -1) {};
		\node (24) at (-6, -1.5) {};
		\node (25) at (-2, -1) {};
		\node (26) at (-5.75, -6) {};
		\node (27) at (-1, -6) {};
		\node (28) at (-0.5, -3) {};
		
		\node (32) at (0, -2.5) {};
		\node (33) at (-0.5, -6) {};
		\node (34) at (4.75, -5.75) {};
		\node (35) at (4.75, 0) {};
		\node (36) at (3, 0) {};
		\node (37) at (1, -1) {};

        \node [style=rn] (18) at (-4.75, 0.5) [label=below:$A(t_0)$]{};
        \node (20) at (-6.5, 1) {$C_\alpha(\widehat{v})$};
        \node [style=rn] (29) at (-5.25, -5.2) [label=below:$\widehat{B}$]{};
		\node [style=rn] (30) at (-5, -2.25) [label=below:$B(t_0)$]{};
		\node [style=rn] (38) at (1.5, -1.5) {};
        \node [style=rn] (21) at (-2, -2) {};
        \node [style=rn] (12) at (-2, 1) {};
        \node [style=rn] (0) at (0, 0) [label=below:{$\widehat{v}$}]{};
		\node [style=rn] (39) at (4, -1) [label=below:$A(\widehat{t})$]{};
		\node (40) at (-6.5, -4) {$C_\beta(\widehat{v})$};
		\node (41) at (5.5, -4) {$C_\gamma(\widehat{v})$};
        \node [style=rn] (vv) at (-3.5, 1) [label=below:$\widetilde{v}$, label=above:$A(\widetilde{t})$]{};
        \node [style=rn] (btv) at (-5, 1.5) [label=below:$B(\widetilde{t})$]{};
	\end{pgfonlayer}
	\begin{pgfonlayer}{edgelayer}
        \draw[decorate, thick, decoration=zigzag, color=blue] (29)--(21);
        \draw[decorate, thick, color=blue] (21) -- (0) -- (12);
        \draw[decorate, thick, decoration=zigzag, color=blue] (12)--(vv)--(btv);
        
		\draw (0) to (1.center);
		\draw (0) to (3.center);
  
		\filldraw[fill={rgb,255: red,240; green,240; blue,240}] (1.center) -- (4.center) -- (5.center)-- (6.center)-- (7.center) --cycle;

		\filldraw[fill={rgb,255: red,240; green,240; blue,240}] (3.center) -- (8.center) -- (9.center) -- (10.center) -- (11.center) -- cycle;

		\draw (14.center) to (13.center);
		\draw (13.center) to (17.center);
		\draw (17.center) to (16.center);
		\draw (16.center) to (15.center);
		\draw (15.center) to (14.center);
		\draw (23.center) to (25.center);
		\draw (25.center) to (24.center);
		\draw (24.center) to (26.center);
		\draw (26.center) to (27.center);
		\draw (27.center) to (28.center);
		\draw (28.center) to (23.center);
		\draw (37.center) to (32.center);
		\draw (32.center) to (33.center);
		\draw (33.center) to (34.center);
		\draw (34.center) to (35.center);
		\draw (35.center) to (36.center);
		\draw (36.center) to (37.center);
		\draw (0) to (38);
	\end{pgfonlayer}
\end{tikzpicture}

            \caption{Case 2 when $\widetilde{v}\in C_{\alpha}(\widehat{v})$.}
            \label{fig:case2-alpha}
        \end{figure}
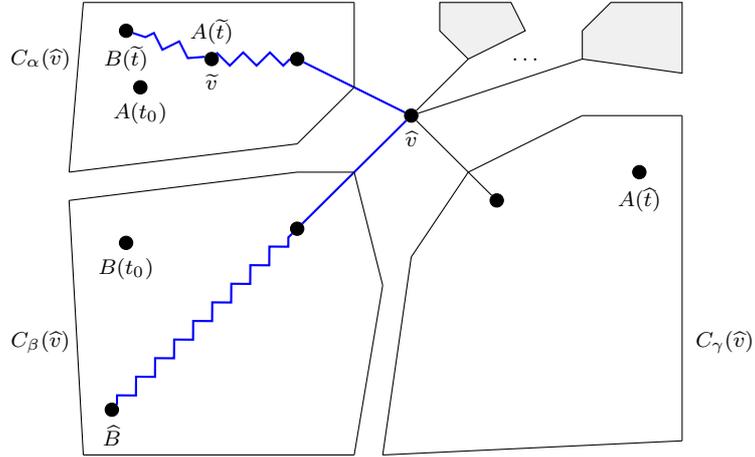
        \end{case}

        \begin{case}
        Assume $\widetilde{v}\in C_{\gamma}(\widehat{v})$. Let $C_{\widehat{\alpha}}(\widetilde{v})$ be the component of $T-\{\widetilde{v}\}$ that contains $\widehat{v}$. Then both $C_{\alpha}(\widehat{v})$ and $C_{\beta}(\widehat{v})$ are contained in $C_{\widehat{\alpha}}(\widetilde{v})$. From \eqref{neenakost:BswitchAL21} and since $d(\widehat{v},\widetilde{v})\geq 1$, it follows that  \[\reach(C_{\widehat{\alpha}}(\widetilde{v})) > \reach(C_{\beta}(\widehat{v}))\geq \min\{d(A(t),B(t))\mid t\in P\}.\]
		Let $C_{\widehat{\beta}}(\widetilde{v})$ be the component that contains $A(\widetilde{t}-1)$ and  $C_{\widehat{\gamma}}(\widetilde{v})$ be the component that contains  $B(\widetilde{t}-1)$ (and therefore $B(\widetilde{t})$). The components $C_{\widehat{\alpha}}(\widetilde{v}), C_{\widehat{\beta}}(\widetilde{v})$ and $C_{\widehat{\gamma}}(\widetilde{v})$ must all be different or the definition of $\widetilde{t}$ is violated. See Figure \ref{fig:case3} for a sketch of the important components and vertices for this case.

        \begin{figure}[!ht]
            \centering
    \begin{tikzpicture}[scale=0.75]
            \footnotesize
\tikzstyle{rn}=[circle, fill=black,draw, inner sep=0pt, minimum size=5pt]
\tikzstyle{bobT}=[fill={rgb,255: red,255; green,128; blue,0}, draw=black, inner sep=0pt, shape=circle, minimum size=5pt]

\tikzstyle{dashed grey}=[-, draw={rgb,255: red,128; green,128; blue,128}, dashed]

	\begin{pgfonlayer}{nodelayer}
		\node (1) at (1, 1) {};
		\node (2) at (2, 1) {$\ldots$};
		\node (3) at (3, 1) {};
		\node (4) at (0.5, 1.5) {};
		\node (5) at (0.5, 2) {};
		\node (6) at (1.75, 2) {};
		\node (7) at (2, 1.5) {};
		\node (8) at (3, 1.5) {};
		\node (9) at (3.5, 2) {};
		\node (10) at (4.75, 2) {};
		\node (11) at (4.75, 0.75) {};
		
		\node (13) at (-1, 0.5) {};
		\node (14) at (-1, 2) {};
		\node (15) at (-5.75, 2) {};
		\node (16) at (-6, -1) {};
		\node (17) at (-2, -0.5) {};
		
		\node (19) at (-7, 1) {};
		
		\node (23) at (-1, -1) {};
		\node (24) at (-6, -1.5) {};
		\node (25) at (-2, -1) {};
		\node (26) at (-5.75, -4) {};
		\node (27) at (-3.5, -3.5) {};
		\node (28) at (-0.5, -1.5) {};
		
		\node (32) at (-5.5, -5.5) {};
		\node (33) at (-3.5, -6) {};
		\node (34) at (4.75, -5.75) {};
		\node (35) at (4.75, 0) {};
		\node (36) at (3, 0) {};
		\node (37) at (1, -1) {};

        \node [style=rn] (18) at (-4.75, 0.5) [label=below:$A(t_0)$]{};
        \node (20) at (-6.5, 1) {$C_\alpha(\widehat{v})$};
        \node [style=rn] (29) at (-5.25, -3) [label=below:$\widehat{B}$]{};
		\node [style=rn] (30) at (-5, -1.75) [label=below:$B(t_0)$]{};
		\node [style=rn] (38) at (1.75, -1.75) [label=right:$B(t_1)$]{};
        \node [style=rn] (21) at (-2, -2) {};
        \node [style=rn] (12) at (-2, 1) {};
        \node [style=rn] (0) at (0, 0) [label=below:{$\widehat{v}$}]{};
		\node [style=rn] (39) at (4, -1) [label=below:$A(\widehat{t})$]{};
		\node (40) at (-6.5, -3) {$C_\beta(\widehat{v})$};
		\node (41) at (5.5, -4) {$C_\gamma(\widehat{v})$};
        \node [style=rn] (vv) at (2.5, -2.5) [label=below:$\widetilde{v}$]{};
        \node [style=rn] (btv) at (0, -4.5) [label=below:$B(\widetilde{t})$]{};
        \node [style=rn] (btv1) at (0.5, -2.75) [label=below:$B(\widetilde{t}-1)$]{};
        \node [style=rn] (atv1) at (3.5, -3.5) [label=below:$A(\widetilde{t}-1)$]{};
        \node [style=rn] (at1) at (4, -5) [label=left:$A(t_1)$]{};
        \node [color = blue](obs) at (2, -5.5) {$C_{\widehat{\beta}}(\widetilde{v})$};
        \node [color = blue](ogs) at (-2.5, -5.5) {$C_{\widehat{\gamma}}(\widetilde{v})$};

        \node [color = blue](oalfahat) at (-7.75, -5) {$C_{\widehat{\alpha}}(\widetilde{v})$};
	\end{pgfonlayer}
	\begin{pgfonlayer}{edgelayer}
        \draw (vv)--(2.75, -2.1);
        \draw (vv)--(2.9, -2.3);
        \draw (vv)--(3, -2.5);
        \draw[pattern=dots, pattern color=green, dotted] (0, -0.75) -- (-0.2, -1.5) -- (-7, -6.25) -- (-7.5, 3) -- (5.25, 3) -- (5.25, 0.5) -- (0.75, 0) -- (2.5, -2) -- (2, -2.5) -- cycle;

        \draw[pattern=horizontal lines light blue, pattern color=gray, dotted] (3, -3) -- (4.75, -3) -- (4.74, -5.74) -- (1, -5.85) -- cycle;

        \draw[pattern=horizontal lines, pattern color=yellow, dotted] (2, -2.75) -- (0.75, -5.82) -- (-3.5, -6) -- (-3, -4) -- (0, -2) -- cycle;
        
        \draw[decorate, thick, color=blue] (vv)--(38);
        \draw[decorate, thick, decoration=zigzag, color=blue] (29)--(21);
        \draw[decorate, thick, decoration=zigzag, color=blue] (vv)--(btv);
        \draw[decorate, thick, color=blue] (21) -- (0);
        \draw[decorate, thick, decoration=zigzag, color=blue] (0) to (38);
        
		\draw (12)--(0) to (1.center);
		\draw (0) to (3.center);
        \draw (vv)--(atv1);
  
		\draw (1.center) -- (4.center) -- (5.center)-- (6.center)-- (7.center) --cycle;

		\draw (3.center) -- (8.center) -- (9.center) -- (10.center) -- (11.center) -- cycle;

		\draw (14.center) to (13.center);
		\draw (13.center) to (17.center);
		\draw (17.center) to (16.center);
		\draw (16.center) to (15.center);
		\draw (15.center) to (14.center);

		\draw (23.center) to (25.center);
		\draw (25.center) to (24.center);
		\draw (24.center) to (26.center);
		\draw (26.center) to (27.center);
		\draw (27.center) to (28.center);
		\draw (28.center) to (23.center);

		\draw (37.center) to (32.center);
		\draw (32.center) to (33.center);
		\draw (33.center) to (34.center);
		\draw (34.center) to (35.center);
		\draw (35.center) to (36.center);
		\draw (36.center) to (37.center);
	
	\end{pgfonlayer}
\end{tikzpicture}

            \caption{A sketch of the graph structure for the Case 3.}
            \label{fig:case3}
        \end{figure}
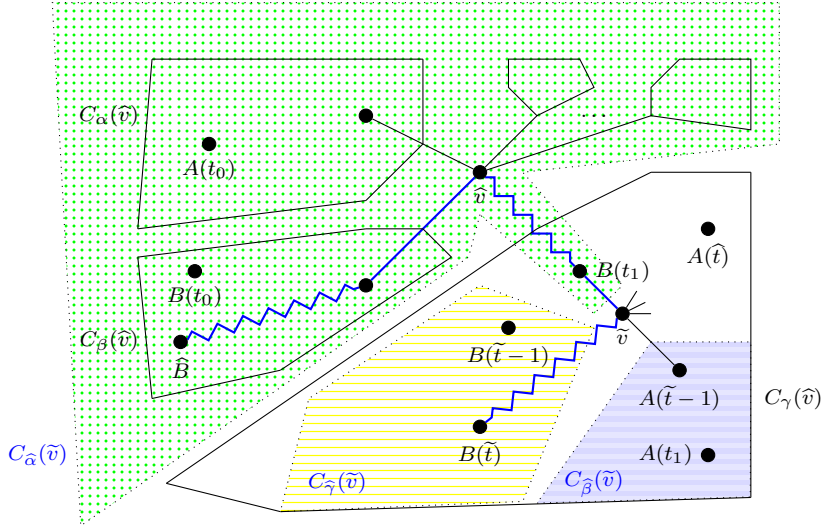
		
		Notice that $A(\widetilde{t}-1)\in \bor(C_{\widehat{\beta}}(\widetilde{v}))$. Since $B(t_0)\in C_{\beta}(\widehat{v})\subset C_{\widehat{\alpha}}(\widetilde{v})$, there exists $t_1=\max\{t \in V(P) \mid  t<\widetilde{t}  \text{ and } B(t) \in C_{\widehat{\alpha}}(\widetilde{v})\}$. Then $B(t_1)\in \bor(C_{\widehat{\alpha}}(\widetilde{v}))$ and $B(t_1+1)=\widetilde{v}$.

        \medskip
		
		{\bf Claim:} $A(t_1)\in C_{\widehat{\beta}}(\widetilde{v})$.

        \medskip
  
		If  $A(t_1)\in C_{\widehat{\alpha}}(\widetilde{v})$, then there exists $t_1<t'<\widetilde{t}$ such that $A(t')\in \bor(C_{\widehat{\alpha}}(\widetilde{v}))$.  Since $t_1<t'<\widetilde{t}$, by definition of $t_1$ it follows that $B(t')\not \in C_{\widehat{\alpha}}(\widetilde{v})$. This, together with the fact that $\widehat{B}\in C_{\widehat{\alpha}}(\widetilde{v})$ and $A(t')\in\bor(C_{\widehat{\alpha}}(\widetilde{v}))$, implies that $A(t')$ is on the path from $\widehat{B}$ to $B(t')$ which is a contradiction with the definition of $\widetilde{t}$.
		
		If $A(t_1)\in C_{\widehat{\delta}}(\widetilde{v})$, for any $C_{\widehat{\delta}}(\widehat{v})\not\in\{C_{\widehat{\alpha}}(\widetilde{v}), C_{\widehat{\beta}}(\widetilde{v})\}$, then because $A(\widetilde{t}-1)\in C_{\widehat{\beta}}(\widetilde{v})$ there exists $t_1<t''<\widetilde{t}$ such that $A(t'')=\widetilde{v}$. Again since $t_1<t''<\widetilde{t}$, by definition of $t_1$ it follows that $B(t'')\not \in C_{\widehat{\alpha}}(\widetilde{v})$, hence $A(t'')$ is on the path from $\widehat{B}$ to $B(t'')$ which contradicts the definition of $\widetilde{t}$. Hence, the assertion of the claim is proved. 

        \medskip
        
		Using $A(t_1)\in C_{\widehat{\beta}}(\widetilde{v})$, $B(t_1)\in C_{\widehat{\alpha}}(\widetilde{v})$ and $B(\widetilde{t})\in C_{\widehat{\gamma}}(\widetilde{v})$, it follows from definition of $\widetilde{t}$ that the conditions for Lemma \ref{switch} are satisfied, therefore $A$ switches with $B$ at $\widetilde{v}$. By Lemma \ref{leg}, $\min\{d(A(t),B(t))\mid t\in V(P)\} \leq \min\{\reach(C_{\widehat{\beta}}(\widetilde{v})),\reach(C_{\widehat{\gamma}}(\widetilde{v}))\}.$
			Also, since $C_{\beta}(\widehat{v})\subset C_{\widehat{\alpha}}(\widetilde{v})$, using inequality \eqref{neenakost:BswitchAL21} we find that  $\min\{d(A(t),B(t))\mid t\in P\} \leq \reach(C_{\widehat{\alpha}}(\widetilde{v})).$ Hence,
		\[\min\{d(A(t),B(t))\mid t\in V(P)\} \leq \eta(\widetilde{v})\leq \mathfrak{H}(T),\]
		which is in contradiction with the starting hypothesis.

        \end{case}
		\begin{case}
		    Assume $\widetilde{v}\in C_{\beta}(\widehat{v})$. We have to consider two subcases.
            
            \begin{subcase}
            \begin{figure}[!ht]
            \centering
    \begin{tikzpicture}[scale=0.75]
            \footnotesize
\tikzstyle{rn}=[circle, fill=black,draw, inner sep=0pt, minimum size=5pt]
\tikzstyle{bobT}=[fill={rgb,255: red,255; green,128; blue,0}, draw=black, inner sep=0pt, shape=circle, minimum size=5pt]

\tikzstyle{dashed grey}=[-, draw={rgb,255: red,128; green,128; blue,128}, dashed]

	\begin{pgfonlayer}{nodelayer}
		\node (1) at (1, 1) {};
		\node (2) at (2, 1) {$\ldots$};
		\node (3) at (3, 1) {};
		\node (4) at (0.5, 1.5) {};
		\node (5) at (0.5, 2) {};
		\node (6) at (1.75, 2) {};
		\node (7) at (2, 1.5) {};
		\node (8) at (3, 1.5) {};
		\node (9) at (3.5, 2) {};
		\node (10) at (4.75, 2) {};
		\node (11) at (4.75, 0.75) {};
		
		\node (13) at (-1, 0.5) {};
		\node (14) at (-1, 2) {};
		\node (15) at (-5.75, 2) {};
		\node (16) at (-6, -1) {};
		\node (17) at (-2, -0.5) {};
		
		\node (19) at (-7, 1) {};
		
		\node (23) at (-1, -1) {};
		\node (24) at (-6, -1.5) {};
		\node (25) at (-2, -1) {};
		\node (26) at (-5.75, -6) {};
		\node (27) at (4.75, -5.75) {};
		\node (28) at (4.75, -3.5) {};
		
		\node (32) at (1, -0.5) {};
		\node (33) at (2, -1.5) {};
		\node (34) at (4.75, -2.75) {};
		\node (35) at (4.75, 0) {};
		\node (36) at (3, 0) {};
		\node (37) at (1, -0.25) {};

        \node [style=rn] (18) at (-4.75, 0.5) [label=below:$A(t_0)$]{};
        \node (20) at (-6.5, 1) {$C_\alpha(\widehat{v})$};
        \node [style=rn] (29) at (-5.25, -5) [label=below:$\widehat{B}$]{};
		\node [style=rn] (30) at (-5, -2.5) [label=below:$B(t_0)$]{};
		\node [style=rn] (38) at (-2, -2.5) [label=right:$A(\widetilde{t}-1)$]{};
        \node (21) at (-1.5, -1.5) {};
        \node (12) at (-1, 0.5) {};
        \node [style=rn] (0) at (0, 0) [label=below:{$\widehat{v}$}]{};
		\node [style=rn] (39) at (4, -1) [label=below:$A(\widehat{t})$]{};
		\node (40) at (-6.5, -3) {$C_\beta(\widehat{v})$};
		\node (41) at (5.5, -1.5) {$C_\gamma(\widehat{v})$};
        \node [style=rn] (vv) at (-1, -3.5) [label=below:$\widetilde{v}$]{};
        \node [style=rn] (btv) at (2, -4.5) [label=below:$B(\widetilde{t})$]{};
        \node [style=rn] (btv1) at (1.75, -3.75) [label=right:$B(\widetilde{t}-1)$]{};
         \node [color = blue](obs) at (3.75, -5.25) {$C_{\widehat{\beta}}(\widetilde{v})$};
        \node [color = blue](ogs) at (-3.5, -5.5) {$C_{\widehat{\gamma}}(\widetilde{v})$};

        \node [color = blue](oalfahat) at (-7.75, -1) {$C_{\widehat{\alpha}}(\widetilde{v})$};
	\end{pgfonlayer}
	\begin{pgfonlayer}{edgelayer}
        \draw (vv)--(-0.75, -3.1); 
        \draw (vv)--(-0.6, -3.3);
        \draw (vv)--(-1, -3);
        \draw[pattern=dots, pattern color=green, dotted] (-0.75, -0.5) -- (-7, -1.5) -- (-7.5, 3) -- (5.5, 3) -- (6.25, -1.5) -- (5, -3) -- (0, -1) --(-1.75, -2.75)-- (-2.5, -2.75) -- cycle;

        \draw[pattern=horizontal lines light blue, pattern color=gray, dotted] (-0.5, -3.75) -- (1, -3) -- (3, -2.75) -- (4.74, -5.5) -- (1, -5.75) -- cycle;

        \draw[pattern=horizontal lines, pattern color=yellow, dotted] (-2, -5) -- (-3, -5.82) -- (-5.5, -5.82) -- (-5.5, -4) -- (-1.5, -3.5) -- cycle;
        
        \draw[decorate, thick, color=blue] (vv)--(38);
        \draw[decorate, thick, decoration=zigzag, color=blue] (29)--(vv);
        \draw[decorate, thick, decoration=zigzag, color=blue] (vv)--(btv);
        \draw[decorate, thick, decoration=zigzag, color=blue] (0) to (38);
        
		\draw (12.center)--(0) to (1.center);
        \draw (0)--(37.center);
		\draw (0) to (3.center);
  
		\draw (1.center) -- (4.center) -- (5.center)-- (6.center)-- (7.center) --cycle;

		\draw (3.center) -- (8.center) -- (9.center) -- (10.center) -- (11.center) -- cycle;

		\draw (14.center) to (13.center);
		\draw (13.center) to (17.center);
		\draw (17.center) to (16.center);
		\draw (16.center) to (15.center);
		\draw (15.center) to (14.center);

		\draw (23.center) to (25.center);
		\draw (25.center) to (24.center);
		\draw (24.center) to (26.center);
		\draw (26.center) to (27.center);
		\draw (27.center) to (28.center);
		\draw (28.center) to (23.center);

		\draw (37.center) to (32.center);
		\draw (32.center) to (33.center);
		\draw (33.center) to (34.center);
		\draw (34.center) to (35.center);
		\draw (35.center) to (36.center);
		\draw (36.center) to (37.center);
	
	\end{pgfonlayer}
\end{tikzpicture}

            \caption{Situation when $\widetilde{v}\in C_{\beta}(\widehat{v})$ and $A(\widetilde{t}-1)$ is in the path from $\widehat{B}$ to $\widehat{v}$ (Subcase 4.1.).}
            \label{fig:case4-1}
        \end{figure}
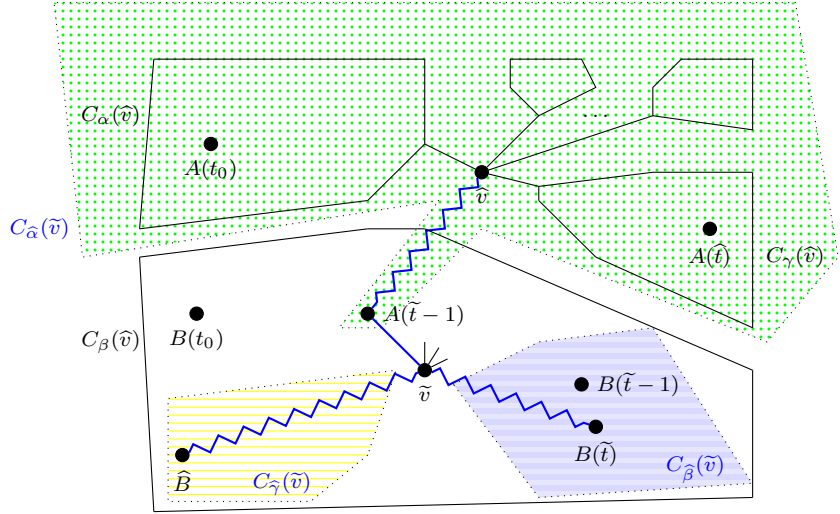
		
             $A(\widetilde{t}-1)$ is in the path from $\widehat{B}$ to $\widehat{v}$. By the choice of $\widetilde{t}$ it follows that $\widetilde{v}$ is also in the path from $\widehat{B}$ to $\widehat{v}$. Then $B(\widetilde{t}-1)$ and hence $B(\widetilde{t})$ are in a different component of $T-\{\widetilde{v}\}$ than both $\widehat{v}$ and $\widehat{B}$. Let $C_{\widehat{\alpha}}(\widetilde{v})$ be the component that contains $\widehat{v}$, $C_{\widehat{\beta}}(\widetilde{v})$ be the component that contains $B(\widetilde{t})$ and $C_{\widehat{\gamma}}(\widetilde{v})$ be the component that contains $\widehat{B}$, see Figure \ref{fig:case4-1}. First, $d(\widetilde{v},B(\widetilde{t})) \leq \reach(C_{\widehat{\beta}}(\widetilde{v}))$. Also, $d(\widehat{B},\widehat{v})\geq d(v,\widehat{v})$  for every $v\in C_{\widehat{\beta}}(\widetilde{v})\subset C_{\beta}(\widehat{v})$ and $d(\widehat{B},\widehat{v})=d(\widehat{B},\widetilde{v})+d(\widetilde{v},\widehat{v})$. Furthermore,  $d(v,\widehat{v})=d(v,\widetilde{v})+d(\widetilde{v},\widehat{v})$ for every $v\in C_{\widehat{\beta}}(\widetilde{v})$, hence $\reach(C_{\widehat{\beta}}(\widetilde{v}))\leq \reach(C_{\widehat{\gamma}}(\widetilde{v}))$. Finally, $C_{\gamma}(\widehat{v})\subset C_{\widehat{\alpha}}(\widetilde{v})$, therefore $\reach(C_{\gamma}(\widehat{v}))\leq \reach(C_{\widehat{\alpha}}(\widetilde{v}))$.

             Using, \eqref{neenakost:BswitchAL21} and $d(A(\widetilde{t}), B(\widetilde{t}))\leq \reach(C_{\widehat{\beta}}(\widetilde{v})) \leq \reach(C_{\widehat{\gamma}}(\widetilde{v}))$, 
             we conclude that 
     \[\min\{d(A(t),B(t))\mid t\in P\} \leq \min\{\reach(C_{\widehat{\alpha}}(\widetilde{v})),\reach(C_{\widehat{\beta}}(\widetilde{v})),\reach(C_{\widehat{\gamma}}(\widetilde{v}))\}\leq \eta(\widetilde{v})\leq \mathfrak{H}(T),\]
    which is a contradiction with the starting hypothesis.   
            \end{subcase}

            \begin{subcase}
             $A(\widetilde{t}-1)$ is not in the path from $\widehat{B}$ to $\widehat{v}$. Then $A(\widetilde{t}-1)$ is in a different component of $T-\{\widetilde{v}\}$ than both $\widehat{v}$ and $\widehat{B}$.  With respect to $T-\{\widetilde{v}\}$, let $C_{\widehat{\alpha}}(\widetilde{v})$ be the component that contains $\widehat{v}$,  $C_{\widehat{\beta}}(\widetilde{v})$ be the component that contains $\widehat{B}$,  $C_{\widehat{\gamma}}(\widetilde{v})$ be the component that contains $A(\widetilde{t}-1)$ and $C_{\widehat{\delta}}(\widetilde{v})$ be the component that contains $B(\widetilde{t}-1)$ and hence $B(\widetilde{t})$. Note that $C_{\widehat{\alpha}}(\widetilde{v})$ may equal $C_{\widehat{\delta}}(\widetilde{v})$ (depicted in Figure \ref{fig:case4-2b}), also $C_{\widehat{\alpha}}(\widetilde{v})$ may equal $C_{\widehat{\beta}}(\widetilde{v})$ (depicted in Figure \ref{fig:case4-2a1}).  These two possibilities are mutually exclusive.

      \begin{figure}[!ht]
            \centering
    \begin{tikzpicture}[scale=0.75]
            \footnotesize
\tikzstyle{rn}=[circle, fill=black,draw, inner sep=0pt, minimum size=5pt]
\tikzstyle{bobT}=[fill={rgb,255: red,255; green,128; blue,0}, draw=black, inner sep=0pt, shape=circle, minimum size=5pt]

\tikzstyle{dashed grey}=[-, draw={rgb,255: red,128; green,128; blue,128}, dashed]

	\begin{pgfonlayer}{nodelayer}
		\node (1) at (1, 1) {};
		\node (2) at (2, 1) {$\ldots$};
		\node (3) at (3, 1) {};
		\node (4) at (0.5, 1.5) {};
		\node (5) at (0.5, 2) {};
		\node (6) at (1.75, 2) {};
		\node (7) at (2, 1.5) {};
		\node (8) at (3, 1.5) {};
		\node (9) at (3.5, 2) {};
		\node (10) at (4.75, 2) {};
		\node (11) at (4.75, 0.75) {};
		
		\node (13) at (-1, 0.5) {};
		\node (14) at (-1, 2) {};
		\node (15) at (-5.75, 2) {};
		\node (16) at (-6, -1) {};
		\node (17) at (-2, -0.5) {};
		
		\node (19) at (-7, 1) {};
		
		\node (23) at (-1, -1) {};
		\node (24) at (-6, -1.5) {};
		\node (25) at (-2, -1) {};
		\node (26) at (-5.75, -6) {};
		\node (27) at (4.75, -5.75) {};
		\node (28) at (4.75, -3.5) {};
		
		\node (32) at (1, -0.5) {};
		\node (33) at (2, -1.5) {};
		\node (34) at (4.75, -2.75) {};
		\node (35) at (4.75, 0) {};
		\node (36) at (3, 0) {};
		\node (37) at (1, -0.25) {};

        \node [style=rn] (18) at (-4.75, 0.5) [label=below:$A(t_0)$]{};
        \node (20) at (-6.5, 1) {$C_\alpha(\widehat{v})$};
        \node [style=rn] (29) at (-5.25, -5) [label=below:$\widehat{B}$]{};
		\node [style=rn] (30) at (-5, -2.5) [label=below:$B(t_0)$]{};
		\node [style=rn] (38) at (-0.3, -2.1) [label=right:$A(\widetilde{t}-1)$]{};
        \node (21) at (-1.5, -1.5) {};
        \node (12) at (-1, 0.5) {};
        \node [style=rn] (0) at (0, 0) [label=below:{$\widehat{v}$}]{};
		\node [style=rn] (39) at (4, -1) [label=below:$A(\widehat{t})$]{};
		\node (40) at (5.5, -5) {$C_\beta(\widehat{v})$};
		\node (41) at (5.5, -1.5) {$C_\gamma(\widehat{v})$};
        \node [style=rn] (vv) at (-1, -3.5) [label=below:$\widetilde{v}$]{};
        \node [style=rn] (btv) at (2, -4.5) [label=below:$B(\widetilde{t})$]{};
        \node [style=rn] (btv1) at (1.75, -3.75) [label=right:$B(\widetilde{t}-1)$]{};
        \node [color = blue](obs) at (3.75, -5.25) {$C_{\widehat{\delta}}(\widetilde{v})$};
        \node [color = blue](ogs) at (2, -3) {$C_{\widehat{\gamma}}(\widetilde{v})$};
        \node [style=rn] (atc) at (0.2, -2.8 ) [label=below:{$A(t')$}]{};
        \node [color = blue](oalfahat) at (-7.75, -1) {$C_{\widehat{\alpha}}(\widetilde{v})=C_{\widehat{\beta}}(\widetilde{v})$};
        \node [style=rn] (vc) at (-2.75, -2.5) {};
	\end{pgfonlayer}
	\begin{pgfonlayer}{edgelayer}
        \draw (vv)--(-0.75, -3.1); 
        \draw (vv)--(-0.6, -3.3);
        \draw (vv)--(-1, -3);
        \draw[pattern=dots, pattern color=green, dotted] (-7, -1.5) -- (-7.5, 3) -- (5.5, 3) -- (6.25, -1.5) -- (5, -3) -- (-0.75, -1) --(-1.75, -5.75)-- (-5.75, -6) -- cycle;

        \draw[pattern=horizontal lines light blue, pattern color=gray, dotted] (-0.5, -3.75) -- (1, -3.5) -- (4.25, -3.25) -- (4.74, -5.5) -- (1, -5.75) -- cycle;

        \draw[pattern=horizontal lines, pattern color=yellow, dotted] (-0.75, -1.5) -- (-0.5, -3.5) -- (3.5, -3) -- (-0.5, -1.25) -- cycle;
        
        \draw[decorate, thick, color=blue] (vv)--(38);
        \draw[decorate, thick, decoration=zigzag, color=blue] (vc)--(vv);
        \draw[decorate, thick, decoration=zigzag, color=blue] (vv)--(btv);
        \draw[decorate, thick, decoration=zigzag, color=blue] (0) -- (vc.center) -- (29);
        
		\draw (12.center)--(0) to (1.center);
        \draw (0)--(37.center);
		\draw (0) to (3.center);
  
		\draw (1.center) -- (4.center) -- (5.center)-- (6.center)-- (7.center) --cycle;

		\draw (3.center) -- (8.center) -- (9.center) -- (10.center) -- (11.center) -- cycle;

		\draw (14.center) to (13.center);
		\draw (13.center) to (17.center);
		\draw (17.center) to (16.center);
		\draw (16.center) to (15.center);
		\draw (15.center) to (14.center);

		\draw (23.center) to (25.center);
		\draw (25.center) to (24.center);
		\draw (24.center) to (26.center);
		\draw (26.center) to (27.center);
		\draw (27.center) to (28.center);
		\draw (28.center) to (23.center);

		\draw (37.center) to (32.center);
		\draw (32.center) to (33.center);
		\draw (33.center) to (34.center);
		\draw (34.center) to (35.center);
		\draw (35.center) to (36.center);
		\draw (36.center) to (37.center);
	
	\end{pgfonlayer}
\end{tikzpicture}

            \caption{Situation when $C_{\widehat{\alpha}}(\widetilde{v})\not= C_{\widehat{\delta}}(\widetilde{v})$ and $C_{\widehat{\alpha}}(\widetilde{v})= C_{\widehat{\beta}}(\widetilde{v})$, one possibility of Subcase 4.2.}
            \label{fig:case4-2a1}
        \end{figure}

      \begin{figure}[!ht]
            \centering
    \begin{tikzpicture}[scale=0.75]
            \footnotesize
\tikzstyle{rn}=[circle, fill=black,draw, inner sep=0pt, minimum size=5pt]
\tikzstyle{bobT}=[fill={rgb,255: red,255; green,128; blue,0}, draw=black, inner sep=0pt, shape=circle, minimum size=5pt]

\tikzstyle{dashed grey}=[-, draw={rgb,255: red,128; green,128; blue,128}, dashed]

	\begin{pgfonlayer}{nodelayer}
		\node (1) at (1, 1) {};
		\node (2) at (2, 1) {$\ldots$};
		\node (3) at (3, 1) {};
		\node (4) at (0.5, 1.5) {};
		\node (5) at (0.5, 2) {};
		\node (6) at (1.75, 2) {};
		\node (7) at (2, 1.5) {};
		\node (8) at (3, 1.5) {};
		\node (9) at (3.5, 2) {};
		\node (10) at (4.75, 2) {};
		\node (11) at (4.75, 0.75) {};
		
		\node (13) at (-1, 0.5) {};
		\node (14) at (-1, 2) {};
		\node (15) at (-5.75, 2) {};
		\node (16) at (-6, -1) {};
		\node (17) at (-2, -0.5) {};
		
		\node (19) at (-7, 1) {};
		
		\node (23) at (-1, -1) {};
		\node (24) at (-6, -1.5) {};
		\node (25) at (-2, -1) {};
		\node (26) at (-5.75, -6) {};
		\node (27) at (4.75, -5.75) {};
		\node (28) at (4.75, -3.5) {};
		
		\node (32) at (1, -0.5) {};
		\node (33) at (2, -1.5) {};
		\node (34) at (4.75, -2.75) {};
		\node (35) at (4.75, 0) {};
		\node (36) at (3, 0) {};
		\node (37) at (1, -0.25) {};

        \node [style=rn] (18) at (-4.75, 0.5) [label=below:$A(t_0)$]{};
        \node (20) at (-6.5, 1) {$C_\alpha(\widehat{v})$};
        \node [style=rn] (29) at (-5.25, -5) [label=below:$\widehat{B}$]{};
		\node [style=rn] (30) at (-5, -2.5) [label=below:$B(t_0)$]{};
		\node [style=rn] (38) at (-1.25, -3) [label=right:$A(\widetilde{t}-1)$]{};
        \node [style=rn] (atc) at (-0.5, -2 ) [label=right:{$A(t')$}]{};
        \node (21) at (-1.5, -1.5) {};
        \node (12) at (-1, 0.5) {};
        \node [style=rn] (0) at (0, 0) [label=below:{$\widehat{v}$}]{};
		\node [style=rn] (39) at (4, -1) [label=below:$A(\widehat{t})$]{};
		\node (40) at (5.5, -5) {$C_\beta(\widehat{v})$};
		\node (41) at (5.5, -1.5) {$C_\gamma(\widehat{v})$};
        \node [style=rn] (vv) at (-3, -3.5) [label=below:$\widetilde{v}$]{};
        \node [style=rn] (btv) at (2, -4.5) [label=below:$B(\widetilde{t})$]{};
        \node [style=rn] (btv1) at (1.75, -3.75) [label=right:$B(\widetilde{t}-1)$]{};
        \node [color = blue](obs) at (3.75, -5.25) {$C_{\widehat{\delta}}(\widetilde{v})$};
        \node [color = blue](ogs) at (1.5, -2.75) {$C_{\widehat{\gamma}}(\widetilde{v})$};

        \node [color = blue](oalfahat) at (-7.75, -1) {$C_{\widehat{\alpha}}(\widetilde{v})$};
        \node [color = blue](obetahat) at (-3.5, -5.5) {$C_{\widehat{\beta}}(\widetilde{v})$};
	\end{pgfonlayer}
	\begin{pgfonlayer}{edgelayer}
        \draw (vv)--(-3.25, -3.1); 
        \draw (vv)--(-3.4, -3.3);
        \draw (vv)--(-3, -3);
        \draw[pattern=dots, pattern color=green, dotted] (-7, -1.5) -- (-7.5, 3) -- (5.5, 3) -- (6.25, -1.5) -- (5, -3) -- (-0.5, -1) --(-1.75, -2.75)-- (-2.75, -3) -- (-1, -0.5) -- cycle;

        \draw[pattern=horizontal lines light blue, pattern color=gray, dotted] (-0.5, -3.75) -- (1, -3.5) -- (4.25, -3.25) -- (4.74, -5.5) -- (1, -5.75) -- cycle;

        \draw[pattern=horizontal lines, pattern color=yellow, dotted] (-2, -3.25) -- (-0.5, -3.5) -- (3.5, -3) -- (-0.5, -1.25) -- cycle;

        \draw[pattern=dots, pattern color=orange, dotted] (-2, -5) -- (-3, -5.82) -- (-5.5, -5.82) -- (-5.5, -4) -- (-3.5, -3.75) -- cycle;
        
        \draw[decorate, thick, color=blue] (vv)--(38);
        \draw[decorate, thick, decoration=zigzag, color=blue] (vv)--(btv);
        \draw[decorate, thick, decoration=zigzag, color=blue] (0) -- (vv.center) -- (29);
        
		\draw (12.center)--(0) to (1.center);
        \draw (0)--(37.center);
		\draw (0) to (3.center);
  
		\draw (1.center) -- (4.center) -- (5.center)-- (6.center)-- (7.center) --cycle;

		\draw (3.center) -- (8.center) -- (9.center) -- (10.center) -- (11.center) -- cycle;

		\draw (14.center) to (13.center);
		\draw (13.center) to (17.center);
		\draw (17.center) to (16.center);
		\draw (16.center) to (15.center);
		\draw (15.center) to (14.center);

		\draw (23.center) to (25.center);
		\draw (25.center) to (24.center);
		\draw (24.center) to (26.center);
		\draw (26.center) to (27.center);
		\draw (27.center) to (28.center);
		\draw (28.center) to (23.center);

		\draw (37.center) to (32.center);
		\draw (32.center) to (33.center);
		\draw (33.center) to (34.center);
		\draw (34.center) to (35.center);
		\draw (35.center) to (36.center);
		\draw (36.center) to (37.center);
	
	\end{pgfonlayer}
\end{tikzpicture}

            \caption{Situation when $C_{\widehat{\alpha}}(\widetilde{v})\not= C_{\widehat{\delta}}(\widetilde{v})$ and $C_{\widehat{\alpha}}(\widetilde{v})\not= C_{\widehat{\beta}}(\widetilde{v})$, second possibility of Subcase 4.2.}
            \label{fig:case4-2a2}
        \end{figure}

      If $C_{\widehat{\alpha}}(\widetilde{v})\not= C_{\widehat{\delta}}(\widetilde{v})$, then $C_{\widehat{\delta}}(\widetilde{v})\subset C_{\beta}(\widehat{v})$. Two possible situations with respect to $\widetilde{v}$ can occur, they can be seen in Figures \ref{fig:case4-2a1} and \ref{fig:case4-2a2}. Notice that, following the same line of thought as in Subcase 4.1, $\reach(C_{\gamma}(\widehat{v}))\leq \reach(C_{\widehat{\alpha}}(\widetilde{v}))$, $\reach(C_{\widehat{\gamma}}(\widetilde{v}))\leq \reach(C_{\widehat{\beta}}(\widetilde{v}))$ and  $\reach(C_{\widehat{\delta}}(\widetilde{v}))\leq \reach(C_{\widehat{\beta}}(\widetilde{v}))$. 
      Let $t'=\max\{t \mid t<\widetilde{t}-1 \text{ and } B(t)\not \in C_{\widehat{\delta}}(\widetilde{v})\}$, hence $B(t')=\widetilde{v}$. If $A(t')\not \in C_{\widehat{\gamma}}(\widetilde{v})$ then there exists $t'<t''<\widetilde{t}-1$ such that $A(t'')=\widetilde{v}$, but then $A(t'')$ is on the path between $\widehat{B}$ and $B(t'')$, contradicting the definition of $\widetilde{t}$. Therefore $A(t') \in C_{\widehat{\gamma}}(\widetilde{v})$ and $d(A(t'), B(t'))\leq \reach(C_{\widehat{\gamma}}(\widetilde{v}))$.   
      Furthermore, $d(A(\widetilde{t}),B(\widetilde{t}))\leq \reach(C_{\widehat{\delta}}(\widetilde{v}))$. 
      
      Using the obtained inequalities and \eqref{neenakost:BswitchAL21} we obtain 
    \[\min\{d(A(t),B(t))\mid t\in P\} \leq\min\{\reach(C_{\widehat{\alpha}}(\widetilde{v})),\reach(C_{\widehat{\beta}}(\widetilde{v})),\reach(C_{\widehat{\gamma}}(\widetilde{v})),\reach(C_{\widehat{\delta}}(\widetilde{v}))\}\leq \eta(\widetilde{v})\leq \mathfrak{H}(T),\]
      which is a contradiction with the starting hypothesis.

      \begin{figure}[!ht]
            \centering
    \begin{tikzpicture}[scale=0.75]
            \footnotesize
\tikzstyle{rn}=[circle, fill=black,draw, inner sep=0pt, minimum size=5pt]
\tikzstyle{bobT}=[fill={rgb,255: red,255; green,128; blue,0}, draw=black, inner sep=0pt, shape=circle, minimum size=5pt]

\tikzstyle{dashed grey}=[-, draw={rgb,255: red,128; green,128; blue,128}, dashed]

	\begin{pgfonlayer}{nodelayer}
		\node (1) at (1, 1) {};
		\node (2) at (2, 1) {$\ldots$};
		\node (3) at (3, 1) {};
		\node (4) at (0.5, 1.5) {};
		\node (5) at (0.5, 2) {};
		\node (6) at (1.75, 2) {};
		\node (7) at (2, 1.5) {};
		\node (8) at (3, 1.5) {};
		\node (9) at (3.5, 2) {};
		\node (10) at (4.75, 2) {};
		\node (11) at (4.75, 0.75) {};
		
		\node (13) at (-1, 0.5) {};
		\node (14) at (-1, 2) {};
		\node (15) at (-5.75, 2) {};
		\node (16) at (-6, -1) {};
		\node (17) at (-2, -0.5) {};
		
		\node (19) at (-7, 1) {};
		
		\node (23) at (-1, -1) {};
		\node (24) at (-6, -1.5) {};
		\node (25) at (-2, -1) {};
		\node (26) at (-5.75, -6) {};
		\node (27) at (4.75, -5.75) {};
		\node (28) at (4.75, -3.5) {};
		
		\node (32) at (1, -0.5) {};
		\node (33) at (2, -1.5) {};
		\node (34) at (4.75, -2.75) {};
		\node (35) at (4.75, 0) {};
		\node (36) at (3, 0) {};
		\node (37) at (1, -0.25) {};

        \node [style=rn] (18) at (-4.75, 0.5) [label=below:$A(t_0)$]{};
        \node (20) at (-6.5, 1) {$C_\alpha(\widehat{v})$};
        \node [style=rn] (29) at (-5.25, -5) [label=below:$\widehat{B}$]{};
		\node [style=rn] (30) at (-5, -2.5) [label=below:$B(t_0)$]{};
		\node [style=rn] (38) at (-1.25, -3) [label=right:$A(\widetilde{t}-1)$]{};
        \node (21) at (-1.5, -1.5) {};
        \node (12) at (-1, 0.5) {};
        \node [style=rn] (0) at (0, 0) [label=below:{$\widehat{v}$}]{};
		\node [style=rn] (39) at (4, -1) [label=below:$A(\widehat{t})$]{};
		\node (40) at (5.5, -5) {$C_\beta(\widehat{v})$};
		\node (41) at (5.5, -1.5) {$C_\gamma(\widehat{v})$};
        \node [style=rn] (vv) at (-3, -3.5) [label=below:$\widetilde{v}$]{};
        \node [style=rn] (btv) at (-0.5, 1.5) [label=below:$B(\widetilde{t})$]{};
        \node [style=rn] (btv1) at (-0.75, 2.5) [label=right:$B(\widetilde{t}-1)$]{};
        \node [style=rn] (atc) at (-2, -4) [label=below:$A(t')$]{};
        \node [style=rn] (btc) at (-3.75, -4.75) [label=right:$B(t')$]{};
        \node [color = blue](ogs) at (1.5, -2.75) {$C_{\widehat{\gamma}}(\widetilde{v})$};

        \node [color = blue](oalfahat) at (-7.75, -1) {$C_{\widehat{\alpha}}(\widetilde{v})=C_{\widehat{\delta}}(\widetilde{v})$};
        \node [color = blue](obetahat) at (-3.5, -5.5) {$C_{\widehat{\beta}}(\widetilde{v})$};
	\end{pgfonlayer}
	\begin{pgfonlayer}{edgelayer}
        \draw (vv)--(-3.25, -3.1); 
        \draw (vv)--(-3.4, -3.3);
        \draw (vv)--(-3, -3);
        \draw (atc)--(vv);
        \draw[pattern=dots, pattern color=green, dotted] (-7, -1.5) -- (-7.5, 3) -- (5.5, 3) -- (6.25, -1.5) -- (5, -3) -- (-0.5, -1) --(-1.75, -2.75)-- (-2.75, -3) -- (-1, -0.5) -- cycle;

        \draw[pattern=horizontal lines, pattern color=yellow, dotted] (-2, -3.25) -- (-0.5, -3.5) -- (3.5, -3) -- (-0.5, -1.25) -- cycle;

        \draw[pattern=dots, pattern color=orange, dotted] (-2, -5) -- (-3, -5.82) -- (-5.5, -5.82) -- (-5.5, -4) -- (-3.5, -3.75) -- cycle;
        
        \draw[decorate, thick, color=blue] (vv)--(38);
        \draw[decorate, thick, decoration=zigzag, color=blue] (0) -- (vv.center) -- (29);
        
		\draw (12.center)--(0) to (1.center);
        \draw (0)--(37.center);
		\draw (0) to (3.center);
  
		\draw (1.center) -- (4.center) -- (5.center)-- (6.center)-- (7.center) --cycle;

		\draw (3.center) -- (8.center) -- (9.center) -- (10.center) -- (11.center) -- cycle;

		\draw (14.center) to (13.center);
		\draw (13.center) to (17.center);
		\draw (17.center) to (16.center);
		\draw (16.center) to (15.center);
		\draw (15.center) to (14.center);

		\draw (23.center) to (25.center);
		\draw (25.center) to (24.center);
		\draw (24.center) to (26.center);
		\draw (26.center) to (27.center);
		\draw (27.center) to (28.center);
		\draw (28.center) to (23.center);

		\draw (37.center) to (32.center);
		\draw (32.center) to (33.center);
		\draw (33.center) to (34.center);
		\draw (34.center) to (35.center);
		\draw (35.center) to (36.center);
		\draw (36.center) to (37.center);
	
	\end{pgfonlayer}
\end{tikzpicture}

            \caption{Situation when $C_{\widehat{\alpha}}(\widetilde{v})=C_{\widehat{\delta}}(\widetilde{v})$, third possibility of Subcase 4.2.}
            \label{fig:case4-2b}
        \end{figure}
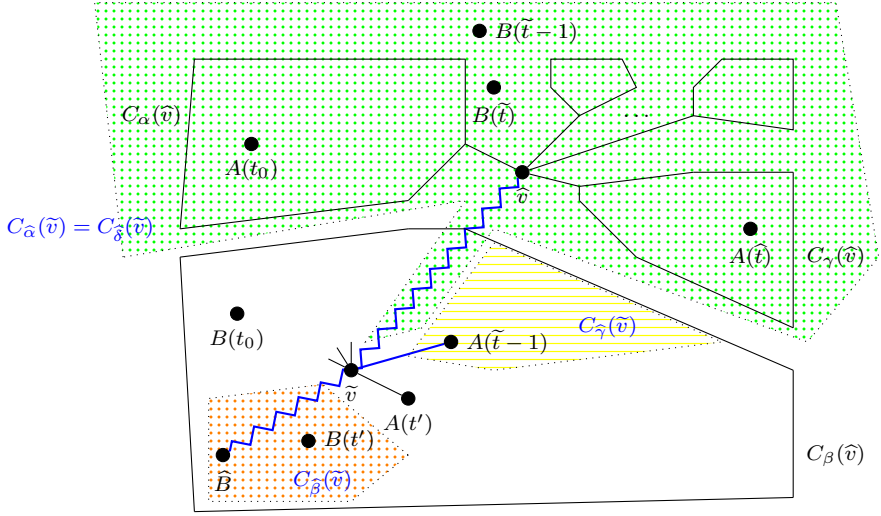
      
       Finally, suppose $C_{\widehat{\alpha}}(\widetilde{v})= C_{\widehat{\delta}}(\widetilde{v})$. Let $t'=\max\{ t\in V(P) \mid t<\widetilde{t}-1 \text{ and } A(t)\not\in \overline{C_{\widehat{\gamma}}(\widetilde{v})}\}$. Then $A(t'+1)=\widetilde{v}$.
       Note that $A(t')$ is adjacent to $\widetilde{v}$. If $B(t')\not \in C_{\widehat{\beta}}(\widetilde{v})$, then  $A(t'+1)$ is in the path from $B(t'+1)$ to $\widehat{B}$, which contradicts the fact that $t'<\widetilde{t}-1$. Hence, $B(t')\in C_{\widehat{\beta}}(\widetilde{v})$. Since $B(\widetilde{t}-1)\in C_{\widehat{\alpha}}(\widetilde{v})$, it follows that 
     there exists $t'<t''<\widetilde{t}-1$ such that $B(t'')=\widetilde{v}$. Hence, $d(A(t''),B(t''))\leq  \reach(C_{\widehat{\gamma}}(\widetilde{v}))$. Using this, \eqref{neenakost:BswitchAL21}, $\reach(C_{\gamma}(\widehat{v}))\leq \reach(C_{\widehat{\alpha}}(\widetilde{v}))$ and  $\reach(C_{\widehat{\gamma}}(\widetilde{v}))\leq \reach(C_{\widehat{\beta}}(\widetilde{v}))$ we obtain
      \[\min\{d(A(t),B(t))\mid t\in P\} \leq \min\{\reach(C_{\widehat{\alpha}}(\widetilde{v})),\reach(C_{\widehat{\beta}}(\widetilde{v})),\reach(C_{\widehat{\gamma}}(\widetilde{v}))\}\leq \eta(\widetilde{v})\leq \mathfrak{H}(T),\]
     which is a contradiction with the starting hypothesis.   
            \end{subcase}
		\end{case} 
        Since we obtained a contradiction in all possible cases, the proof is concluded.
     \end{proof}

\begin{proposition}\label{prop:drevoSpMeja}
    If $T$ is a tree, then $\svSpan{T}\geq \mathfrak{H}(T)$.
\end{proposition}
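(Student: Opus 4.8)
The plan is to prove the bound by explicitly constructing, for a suitable path $P$, two surjective weak homomorphisms $A,B\colon P\to T$ satisfying $\min\{d(A(t),B(t))\mid t\in V(P)\}\ge \mathfrak{H}(T)$; since $\svSpan{T}$ is the maximum of $m_P(f,g)$ over all such pairs, this immediately yields $\svSpan{T}\ge\mathfrak{H}(T)$. Fix a vertex $u$ with $\eta(u)=\mathfrak{H}(T)$. If $\mathfrak{H}(T)=0$ there is nothing to prove, so assume $d:=\mathfrak{H}(T)\ge 1$; then $\deg(u)\ge 3$ and, writing $C_1(u),\dots,C_k(u)$ for the components of $T-\{u\}$ ordered by non-increasing reach as in Definition \ref{def:triod-max-eta}, we have $\reach(C_1(u))\ge\reach(C_2(u))\ge\reach(C_3(u))=d$. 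For $i\in\{1,2,3\}$ choose $a_i\in C_i(u)$ with $d(u,a_i)=\reach(C_i(u))\ge d$. The only structural fact I will need is that, since $T$ is a tree, any two vertices lying in different components of $T-\{u\}$ are at distance equal to the sum of their distances to $u$; in particular a player sitting at $a_i$ stays at distance at least $d$ from any player located in a component other than $C_i(u)$, and at distance at least $d$ from a player located at $u$ itself.

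The construction is a parking scheme. Its basic move is an \emph{exploration block}: one player (the \emph{explorer}) performs a depth-first traversal of a single component $C_j(u)$, leaving from $u$ and returning to $u$, while the other player (the \emph{parker}) waits motionless at some $a_m$ with $m\ne j$. Throughout such a block the explorer is at distance $p\ge 0$ from $u$ inside $C_j(u)$ while the parker is in the different component $C_m(u)$, so by the distance fact above the two players are always $p+\reach(C_m(u))\ge \reach(C_m(u))\ge d$ apart. I will first run a sequence of exploration blocks in which $A$ is always the explorer and $B$ always the parker, arranged so that $A$ traverses every component $C_1(u),\dots,C_k(u)$ (parking $B$ at $a_2$ while $A$ explores $C_1(u)$, and at $a_1$ while $A$ explores any other component); after this phase $A$ has visited every vertex of $T$ and is therefore surjective. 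I then run the mirror-image phase with the roles of $A$ and $B$ interchanged, after which $B$ is surjective as well. Coverage is thus handled one player per phase, and the distance bound is immediate inside every exploration block.

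The delicate part, and the main obstacle, is the collection of \emph{transitions} between blocks: the moments at which the parker must relocate from $a_1$ to $a_2$ (or conversely), or at which the two players exchange the roles of explorer and parker. In each such transition some player is forced to pass through the hub $u$, where its distance to $u$ drops to $0$. The observation that resolves this is that the third component supplies a waiting vertex $a_3$ of depth exactly $d$ lying in a component distinct from both $C_1(u)$ and $C_2(u)$. Concretely, to move one player across $u$ I first send the other player to rest at $a_3$ (or at $a_1$, resp.\ $a_2$, whenever that component is not the one being crossed), and then route the moving player from its current deep vertex through $u$ into a \emph{different} component; while the moving player is at $u$ the stationary player is at distance exactly $d$, and at every other instant the two occupy distinct components and are hence at distance at least $d$. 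Each transition therefore decomposes into a short sequence of single admissible steps that never violates the safety distance $d$, and it is precisely the depth-$d$ parking afforded by the third branch that pins the bound at $d=\reach(C_3(u))=\mathfrak{H}(T)$. Concatenating all blocks and transitions produces the desired path $P$ together with $A$ and $B$: consecutive configurations differ by one admissible move for each player, $A$ is surjective by the first phase and $B$ by the second, and $\min\{d(A(t),B(t))\mid t\in V(P)\}\ge d$. Hence $\svSpan{T}\ge\mathfrak{H}(T)$.
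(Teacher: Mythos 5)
Your proposal is correct and takes essentially the same approach as the paper: both proofs construct a two-phase parking strategy around a vertex $u$ with $\eta(u)=\mathfrak{H}(T)$, keeping one player parked deep in one of the two largest branches while the other explores, and using the third-deepest branch (of reach exactly $\mathfrak{H}(T)$) as the waiting spot that makes crossings through $u$ safe. The paper's version is merely more condensed --- it parks at depth exactly $\eta(u)$ and lets Bob sweep all of $T-C_1(u)$ in a single block before handling $C_1(u)$ --- but the key idea and its execution coincide with yours.
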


\begin{proof}
    If $T$ is a path, then by Definition \ref{def:triod-max-eta} $\mathfrak{H}(T)=0$ and by definition of the strong vertex span the assertion holds true.

    Assume $T$ is not a path. Let $v\in V(T)$ be such that $\eta(v) = \mathfrak{H}(T)$, $C_{1}(v), C_{2}(v),...,C_{deg(v)}(v)$ the components of $T-\{v\}$ denoted in such way that $\reach(C_i(v))\geq \reach(C_{i+1}(v))$ for each $i\in \{1, 2, \ldots, deg(v)-1\}$, and $v_j\in C_j(v)$ such that $d(v, v_j)=\eta(v)$ for all $j\in\{1,2,3\}$. Let Alice start her walk in $v_1$ and Bob his in $v_2$. While Alice stays in $v_1$, Bob can visit all the vertices of $T-C_1(v)$ and return to $v_2$ while keeping the distance at least $\eta(v)$ from Alice. Now, Alice moves to $v_3$, while Bob does not move; the distance between them is still at least $\eta(v)$ at all times. Finally, Bob can visit the remaining vertices (i.e. all vertices of $C_1(v)$) an still be at the distance at least $\eta(v)$ from Alice at all times. After Bob visited all vertices of $T$, Alice can use an analogous strategy to visit all the vertices of $T$ while keeping the distance at least $\eta(v)$ from Bob at all times.
\end{proof}

\begin{corollary}\label{spanOfTrees}
    If $T$ is a tree, then 
    \[\svSpan{T}=
    \begin{cases}
        0, & T \text{ is the trivial graph,}\\
        1, & T \text{ is a non-trivial path,} \\
        \mathfrak{H}(T), & \text{otherwise.}
    \end{cases}
    \]
\end{corollary}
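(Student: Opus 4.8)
The plan is to assemble the corollary from the two preceding results (Proposition \ref{prop:drevoSpMeja} and Theorem \ref{thm:drevoZgMeja}) together with the known behaviour of paths, treating the three cases of the statement separately. The only genuine care needed is that Theorem \ref{thm:drevoZgMeja} supplies the upper bound $\svSpan{T} \leq \mathfrak{H}(T)$ only under the hypothesis $\svSpan{T} \geq 2$, so the small-value regime $\svSpan{T} \in \{0,1\}$ must be pinned down by other means.

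First I would dispose of the two degenerate cases. If $T$ is the trivial graph, then every weak homomorphism $P \to T$ is constant, so $m_P(f,g) = 0$ for all admissible $f,g$ and hence $\svSpan{T} = 0$; here also $\mathfrak{H}(T) = 0$. If $T$ is a non-trivial path, then $T$ has no vertex of degree at least $3$, so $\mathfrak{H}(T) = 0$ by Definition \ref{def:triod-max-eta}. Placing the two players at the endpoints of a single edge shows $\svSpan{T} \geq 1$, while the result of \cite{BaTa23} quoted at the start of the proof of Theorem \ref{thm:drevoZgMeja}, namely that $\svSpan{T} \geq 2$ forces $T$ to contain a vertex of degree at least $3$, gives $\svSpan{T} \leq 1$. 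Hence $\svSpan{T} = 1$ for a non-trivial path.

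For the remaining case, suppose $T$ is neither trivial nor a path. Then $T$ has a vertex $v$ with $\deg(v) \geq 3$, and since every component of $T - \{v\}$ has reach at least $1$, we get $\eta(v) \geq 1$ and therefore $\mathfrak{H}(T) \geq 1$. Proposition \ref{prop:drevoSpMeja} already gives the lower bound $\svSpan{T} \geq \mathfrak{H}(T)$, so it remains to prove the matching upper bound. I would split on the value of $\mathfrak{H}(T)$: if $\mathfrak{H}(T) \geq 2$, then the lower bound already yields $\svSpan{T} \geq 2$, so Theorem \ref{thm:drevoZgMeja} applies directly and gives $\svSpan{T} \leq \mathfrak{H}(T)$, forcing equality. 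If instead $\mathfrak{H}(T) = 1$, I would argue by contradiction: were $\svSpan{T} \geq 2$, Theorem \ref{thm:drevoZgMeja} would give $\svSpan{T} \leq \mathfrak{H}(T) = 1$, which is absurd; hence $\svSpan{T} \leq 1$, and combined with $\svSpan{T} \geq \mathfrak{H}(T) = 1$ this gives $\svSpan{T} = 1 = \mathfrak{H}(T)$.

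The step I expect to require the most attention is precisely this interface between the two regimes: Theorem \ref{thm:drevoZgMeja} is a conditional statement, so one must avoid circularly invoking it to bound $\svSpan{T}$ before $\svSpan{T} \geq 2$ has been established. Handling the $\mathfrak{H}(T) = 1$ subcase via the contrapositive of the theorem, rather than a direct application, is what closes this gap; everything else is bookkeeping over the case distinction in the statement.
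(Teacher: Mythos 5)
Your proposal is correct and follows essentially the same route as the paper: lower bound from Proposition \ref{prop:drevoSpMeja}, upper bound from Theorem \ref{thm:drevoZgMeja} split on $\mathfrak{H}(T)\geq 2$ (direct application) versus $\mathfrak{H}(T)=1$ (contradiction via the theorem's hypothesis), exactly as in the paper's proof. The only cosmetic difference is that you spell out the trivial-graph and non-trivial-path cases, where the paper simply cites \cite{BaTa23}.
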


\begin{proof}
    The result for the trivial graph and non-trivial paths is presented in \cite{BaTa23}.
    If $T$ is not a path, then it contains a vertex of degree at least 3. Therefore $\mathfrak{H}(T)\geq 1$. If $\mathfrak{H}(T)=1$, then by Proposition \ref{prop:drevoSpMeja} we have that $\svSpan{T}\geq 1$. Supposing $\svSpan{T}\geq 2$ by Theorem \ref{thm:drevoZgMeja} we obtain $\svSpan{T}\leq \mathfrak{H}(T) = 1$, a contradiction. Therefore, if $\mathfrak{H}(T)=1$, then $\svSpan{T}=1$. If $\mathfrak{H}(T)>1$, the result is immediate consequence of Theorem \ref{thm:drevoZgMeja} and Proposition \ref{prop:drevoSpMeja}.
\end{proof}

To state the next result, we need two additional definitions. First, a weak homomorphism $f: G \rightarrow H$ is \emph{edge surjective} if it is surjective and for every $uv \in E(H)$ there exists an edge $xy \in E(G)$ such that $u=f(x)$ and $v=f(y)$. Second, analogous to the definition of the strong vertex span, the strong edge span is defined as follows.

\begin{definition}[\cite{BaTa23}]\label{def:strongESpans}
Let $H$ be a connected graph. Define the \emph{strong edge span} of the graph $H$, denoted by $\seSpan{H}$, as
\[
     \seSpan{H} = \max \{ m_P(f,g)  \mid f,g: P \rightarrow H \text{ are edge surjective weak homomorphisms and $P$ is a path} \}. 
\]
\end{definition}

For trees visiting all vertices equals traversing all edges, hence the strong vertex span and the strong edge span of a tree are equal. Hence, the next corollary is a direct consequence of Corollary \ref{spanOfTrees}.

\begin{corollary}\label{EspanOfTrees}
    If $T$ is a tree, then 
    \[\seSpan{T}=
    \begin{cases}
        0, & T \text{ is the trivial graph,}\\
        1, & T \text{ is a non-trivial path,} \\
        \mathfrak{H}(T), & \text{otherwise.}
    \end{cases}
    \]
\end{corollary}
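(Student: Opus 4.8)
The plan is to prove that for any tree $T$ the strong edge span coincides with the strong vertex span, and then to invoke Corollary \ref{spanOfTrees} to read off the stated formula. Since the three cases on the right-hand side are exactly those appearing in Corollary \ref{spanOfTrees}, once the equality $\seSpan{T} = \svSpan{T}$ is established the result is immediate, with no separate treatment of the trivial graph or of paths required.

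First I would dispose of one inequality for free. By the definition of an edge surjective weak homomorphism, every edge surjective map is in particular surjective. Hence every pair $f,g$ admissible in the maximisation defining $\seSpan{T}$ is also admissible in the maximisation defining $\svSpan{T}$, so the edge span is a maximum taken over a subfamily, giving $\seSpan{T} \leq \svSpan{T}$.

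The heart of the argument is the reverse inequality, for which I would show that in a tree surjectivity already forces edge surjectivity, so that the two families of weak homomorphisms actually coincide. Fix a surjective weak homomorphism $f : P \to T$ and an arbitrary edge $uv \in E(T)$. Because $T$ is a tree, $uv$ is a bridge: the graph $T - uv$ splits into two components $T_u \ni u$ and $T_v \ni v$, and $uv$ is the unique edge of $T$ joining them. Surjectivity guarantees that $f$ takes values in both $T_u$ and $T_v$, since it hits $u$ and $v$. Choosing indices $i < j$ with $f(i) \in T_u$ and $f(j) \in T_v$, and letting $k$ be the largest index with $i \leq k < j$ and $f(k) \in T_u$, the weak-homomorphism condition forces $f(k)$ and $f(k+1)$ to be equal or adjacent; as they lie in different components they cannot be equal, and the only cross-edge is $uv$, so $\{f(k), f(k+1)\} = \{u,v\}$. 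Thus $f$ traverses $uv$, and as $uv$ was arbitrary, $f$ is edge surjective.

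Consequently, for trees the admissible pairs for the two spans are identical, giving $\svSpan{T} \leq \seSpan{T}$ and hence $\seSpan{T} = \svSpan{T}$; combined with Corollary \ref{spanOfTrees} this yields the claimed case distinction. The only step requiring any care is the bridge-crossing argument, but this is the standard fact that a walk visiting both sides of a bridge must cross it at least once; the tree structure is precisely what makes the crossing edge unique and thereby identifies it as $uv$.
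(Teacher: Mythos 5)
Your proposal is correct and follows essentially the same route as the paper: the paper's proof consists of the single observation that for trees ``visiting all vertices equals traversing all edges,'' so that $\seSpan{T}=\svSpan{T}$, after which Corollary \ref{spanOfTrees} gives the formula. Your bridge-crossing argument is exactly the justification the paper leaves implicit, carefully worked out (both the trivial inequality from edge surjectivity implying surjectivity, and the converse in trees via the uniqueness of the edge across the cut), so it is a fully detailed version of the same proof rather than a different approach.
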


\section{Computing the strong vertex span of a tree}\label{sec:treesAlg} 

Corollary \ref{spanOfTrees} states that for a tree $T$ different from a path to determine the strong vertex span of $T$ it is necessary to find the triod size of $T$. To do this the notions related to rooted trees are necessary. We follow terminology from the book by Valiente \cite{Va02} for this. Here we restate the crucial notions, for things not explicitly defined here see \cite{Va02}.

Let $T$ be a tree and $r\in V(T)$ a chosen vertex, called \emph{the root of $T$}. The pair $(T, r)$ is called a \emph{rooted tree}. We also say, that $T$ is rooted in $r$. A vertex $u\in V(T)$ is called \emph{the parent} of a vertex $v\in V(T)$, if $u v \in E(T)$ and $d(u,r) < d(v,r)$. In such case, $v$ is called \emph{a child of $u$}. For all $u\in V(T)$, any vertex $v\not= u$ of the subtree rooted in $u$ is called a \emph{descendant} of $u$. For all $u\in V(T)$, the \emph{height} of the subtree rooted in $u$ is the length of a longest path from $u$ to any descendant of $u$. The \emph{center} of $T$ is the set of all vertices of $T$ with eccentricity equal to the radius of $T$, i.e. $\cen(T)=\{u \in V(T) \mid \e(u)=\rad(T)\}$. It is a well-known fact that for every tree $T$ it holds true that $|\cen(T)|=1$ or $|\cen(T)|=2$, moreover if $|\cen(T)|=2$, then the center consists of two adjacent vertices. It is also not difficult to see that the center of a tree can be determined in linear time (e.g. by using two breadth-first-search traversals).

The following lemma gives a nice topological property of all the vertices $u$ of a tree $T$ with the maximum triod size.

\begin{lemma}\label{lem:centersOnPath}
    Let $T$ be a tree and let $S = \{ v \in V(T) \mid \eta(v) = \mathfrak{H}(T) \}$. All vertices of $S$ lie on the same path.
\end{lemma}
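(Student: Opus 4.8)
The plan is to argue by contradiction, after first disposing of the degenerate case. If $\mathfrak{H}(T)=0$, then no vertex has degree at least $3$ (a vertex of degree $\geq 3$ forces $\eta(v)\geq 1$, since each of its three components has reach at least $1$), so $T$ is a path; then $S=V(T)$ and all its vertices lie on the single path $T$. Hence I may assume $k:=\mathfrak{H}(T)\geq 1$ and suppose, for contradiction, that the vertices of $S$ do not lie on a common path.

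Under this assumption the minimal subtree $\langle S\rangle$ spanning $S$ is not a path, so it contains a vertex $m$ of degree at least $3$ in $\langle S\rangle$. Choosing leaves of $\langle S\rangle$ lying in three different branches at $m$ (leaves of $\langle S\rangle$ belong to $S$) yields three vertices $a,b,c\in S$, with $m\neq a,b,c$, sitting in three distinct components $A\ni a$, $B\ni b$, $C\ni c$ of $T-\{m\}$; here $m$ is the median of $a,b,c$. I relabel so that $\reach(A)\geq\reach(B)\geq\reach(C)=:\gamma$.

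Next I would establish two inequalities. For the lower bound on $\eta(m)$: the three branches $A,B,C$ all have reach at least $\gamma$, so at least three components of $T-\{m\}$ have reach $\geq\gamma$, whence $\eta(m)=\reach(C_3(m))\geq\gamma$. For the upper bound on $\eta(c)$ I use the following structural observation. Every component of $T-\{c\}$ other than the one, call it $D$, that contains $m$ is entirely contained in $C$; for any vertex $p$ in such a component the path from $m$ to $p$ passes through $c$, so $d(c,p)=d(m,p)-d(m,c)\leq\gamma-d(m,c)$, which bounds the reach of each of these components by $\gamma-d(m,c)$. On the other hand $\reach(D)\geq d(c,m)+\reach(A)>\gamma-d(m,c)$, so $D$ is the strictly largest branch at $c$; consequently the third‑largest reach satisfies $\eta(c)\leq\gamma-d(m,c)\leq\gamma-1$.

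Finally I combine these. Since $c\in S$, we have $\eta(c)=k$, so $k\leq\gamma-1$, i.e. $\gamma\geq k+1$; but then $\eta(m)\geq\gamma\geq k+1>\mathfrak{H}(T)$, contradicting that $\mathfrak{H}(T)$ is the maximum of $\eta$ over $V(T)$. Therefore no such triod $a,b,c$ exists and $S$ lies on a common path. I expect the main obstacle to be the upper bound on $\eta(c)$—precisely, the argument that the branch $D$ toward $m$ dominates every branch of $c$ pointing into $C$, so that $\eta(c)$ is controlled by $\gamma-d(m,c)$; the median/triod setup and the lower bound on $\eta(m)$ are routine once this is in place.
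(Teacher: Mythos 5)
Your proof is correct, and its skeleton matches the paper's: find a vertex that separates three members of $S$ into distinct components of the tree minus that vertex, then contradict the maximality of $\mathfrak{H}(T)$ at that vertex. The execution differs in two genuine ways. First, the paper takes three vertices $u,v,w\in S$ not lying on a common path and works with their median $x$ (leaving unstated why such a triple must exist when $S$ does not lie on a path), whereas your Steiner-tree construction produces the branch vertex $m$ and the three leaves $a,b,c\in S$ explicitly, so this existence issue never arises. Second, and more substantively, the key inequality is derived in dual ways: the paper asserts $\eta(x)\geq\min\{\eta(u)+1,\eta(v)+1,\eta(w)+1\}$, whose implicit justification is that at most one of the top-three components at $u$ contains $x$, so the component of $T-\{x\}$ containing $u$ inherits reach at least $d(x,u)+\eta(u)$ (and likewise for $v$ and $w$); you instead bound from above at the single vertex $c$ realizing the minimum branch reach $\gamma$, showing every component of $T-\{c\}$ other than the one containing $m$ is squeezed inside $C$ with reach at most $\gamma-d(m,c)$, hence $\eta(c)\leq\gamma-1$ and $\eta(m)\geq\gamma\geq\mathfrak{H}(T)+1$. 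Both derivations amount to the same inequality $\reach(C)\geq\eta(c)+d(m,c)$; the paper's route is shorter and symmetric in the three chosen vertices, while yours fills in details the paper compresses into one line, at the cost of the extra step verifying that the branch toward $m$ is strictly the largest at $c$ (needed so that $C_3(c)$ is among the squeezed components). Both arguments are sound.
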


\begin{proof}
The case where $|S|\leq 2$ is trivial.  Let $|S|\geq 3$. Working towards a contradiction assume there are three distinct vertices $u, v, w \in S$ that do not lie on the same path. Let $x \in V(T)$ be the unique vertex that lies on the $u,v$-path, $v,w$-path and the $u,w$-path. Clearly, $x\not \in\{u, v, w\}$ otherwise the vertices $u, v, w$ would lie on the same path. It follows that $\eta(x) \geq \min\{\eta(u) + 1, \eta(v) +1, \eta(w) + 1\} \geq \mathfrak{H}(T) +1$, which is a contradiction with the definition of $\mathfrak{H}(T)$.
\end{proof}

Let $T$ be a tree and $v$ its vertex. Using notations from Definition \ref{def:triod-max-eta} for the components $C_i(v)$, for any $i\in\{1,2, \ldots, \deg(v)\}$, and definition of eccentricity of a vertex the following lemma is obvious.

\begin{lemma}\label{lemma:c1-ecc}
    If $T$ is a tree and $v\in V(T)$, then $C_1(v)$ contains $u\in V(T)$ such that $d(u,v)=\e(v)$.
\end{lemma}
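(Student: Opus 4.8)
The plan is to prove the sharper quantitative statement that $\e(v) = \reach(C_1(v))$, from which the lemma follows at once by unwinding the definition of reach. The guiding observation is that, because $T$ is a tree, the set $V(T)\setminus\{v\}$ is partitioned among the components $C_1(v),\dots,C_{\deg(v)}(v)$ of $T-\{v\}$, so the distance from $v$ to any other vertex is realised inside exactly one of these components.

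First I would record that for each index $i$ the closure $\overline{C_i(v)}$ differs from $C_i(v)$ only by the vertex $v$ itself, which contributes distance $0$; hence
\[
\reach(C_i(v)) = \max\{d(v,u)\mid u\in \overline{C_i(v)}\} = \max\{d(v,u)\mid u\in V(C_i(v))\}.
\]
Then, using that every vertex $u\neq v$ lies in a unique component, I would split the eccentricity according to this partition:
\[
\e(v) = \max_{u\in V(T)} d(v,u) = \max_{1\le i\le \deg(v)}\ \max_{u\in V(C_i(v))} d(v,u) = \max_{1\le i\le \deg(v)} \reach(C_i(v)).
\]
Since the components are indexed so that $\reach(C_i(v))\ge \reach(C_{i+1}(v))$, this final maximum is attained at $i=1$, giving $\e(v)=\reach(C_1(v))$.

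Finally, by the definition of reach there is a vertex $u\in\overline{C_1(v)}$ with $d(v,u)=\reach(C_1(v))=\e(v)$. Whenever $T$ is non-trivial each component is non-empty, so $\reach(C_1(v))\ge 1$ and therefore $u\neq v$, which forces $u\in C_1(v)$; this $u$ is the required witness. I do not anticipate any real obstacle: the only care needed is the bookkeeping that the maximum distance from $v$ is attained within a single component, which is precisely the tree property, together with the harmless passage from $\overline{C_1(v)}$ to $C_1(v)$ (the removed vertex $v$ realises distance $0$ and so never attains the maximum in the non-trivial case). This is exactly the routine verification the paper signals by calling the statement obvious.
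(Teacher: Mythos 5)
Your proof is correct: the paper itself offers no argument (it declares the lemma obvious from Definition 2.5 and the definition of eccentricity), and your decomposition $\e(v)=\max_i \reach(C_i(v))=\reach(C_1(v))$, followed by the observation that the witness cannot be $v$ itself, is exactly the routine verification being left implicit. The only cosmetic remark is that the partition of $V(T)\setminus\{v\}$ among the components of $T-\{v\}$ holds in any graph, so the appeal to ``the tree property'' is not actually needed there.
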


Using the notion of rooted trees, the lemmas below provide insight into why it is useful to root a tree in a central vertex for determining the strong vertex span of the tree.

\begin{lemma}\label{lem:C1isAbove}
    Let $T$ be a non-trivial tree rooted in a central vertex of $T$, say $c$. If $v\in V(T)$ is different from $c$, then $C_1(v)$ is the component that contains the parent of $v$.
\end{lemma}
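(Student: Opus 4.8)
The plan is to reduce the statement to a claim about where the eccentricity of $v$ is attained, and then to exploit the centrality of the root $c$. Since $T$ is rooted at $c$ and $v\neq c$, the parent $p$ of $v$ is well defined, and the component of $T-\{v\}$ containing $p$ — call it $C_{\mathrm{up}}$ — is precisely the ``upward'' part, containing $c$ and everything outside the subtree rooted at $v$; every other component of $T-\{v\}$ is the subtree hanging from a child of $v$ (a \emph{downward} component). By Lemma \ref{lemma:c1-ecc}, together with the definition of $\reach$, the component $C_1(v)$ is exactly the one whose closure contains a vertex at distance $\e(v)$ from $v$, so that $\reach(C_1(v))=\e(v)$. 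Thus it suffices to show $\reach(C_{\mathrm{up}})=\e(v)$, i.e.\ that some vertex realizing $\e(v)$ lies in $\overline{C_{\mathrm{up}}}$.

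First I would record the only geometric fact needed. Because $c$ is the root, the path in $T$ from $v$ to $c$ begins with the edge $vp$, so $c\in\overline{C_{\mathrm{up}}}$, and for every vertex $w$ in a downward component the unique $w$–$c$ path passes through $v$. Consequently $d(w,c)=d(w,v)+d(v,c)$ for every such $w$.

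The heart of the argument is a short contradiction. Suppose the eccentricity of $v$ were attained at a vertex $w$ in a downward component, so $d(v,w)=\e(v)$. Using the additivity above and $d(v,c)\geq 1$ (since $v\neq c$), together with $\e(v)\geq\rad(T)$ (the radius being the minimum eccentricity), we obtain
\[
    d(w,c)=\e(v)+d(v,c)\geq \rad(T)+1 > \rad(T)=\e(c),
\]
where the final equality holds because $c$ is a central vertex. This contradicts $d(w,c)\leq\e(c)$. Hence no downward component can contain a vertex at distance $\e(v)$ from $v$; every vertex attaining $\e(v)$ lies in $\overline{C_{\mathrm{up}}}$, and moreover every downward component has reach strictly less than $\e(v)$. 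Therefore $\reach(C_{\mathrm{up}})=\e(v)\geq\reach(C_j(v))$ for all $j$, with strict inequality for the downward components, so $C_{\mathrm{up}}$ is the unique component of maximum reach; that is, $C_1(v)=C_{\mathrm{up}}$ is the component containing the parent of $v$.

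I expect no serious obstacle here. The only points requiring care are correctly identifying $C_{\mathrm{up}}$ with the parent/upward component and justifying the distance additivity $d(w,c)=d(w,v)+d(v,c)$ from the fact that $c$ is the root lying above $v$. Everything else is the one-line inequality driven by the defining property $\e(c)=\rad(T)$ of a central vertex, which is exactly what forces the heaviest branch at $v$ to point toward the center.
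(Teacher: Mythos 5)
Your proof is correct and follows essentially the same route as the paper's: both invoke Lemma \ref{lemma:c1-ecc}, use the additivity $d(w,c)=d(w,v)+d(v,c)$ for vertices $w$ below $v$, and derive the contradiction $d(w,c)\geq \rad(T)+1>\rad(T)$ with the centrality of $c$. The only cosmetic difference is that you phrase the contradiction as ``the eccentricity cannot be attained in a downward component'' (yielding the strict reach inequality explicitly), while the paper assumes directly that $C_1(v)$ is a downward component; the content is identical.
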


\begin{proof}
    If $v\not= c$ ($v$ is not the root), then $\e(v)\geq\rad(T)$. Note, that $v$ may also be a central vertex, if $T$ is bi-central. In any case, $d(v,c) \geq 1$. Towards a contradiction suppose, that $C_1(v)$ is the component that does not contain the parent of $v$. Therefore, $C_1(v)$ contains descendants of $v$. By Lemma \ref{lemma:c1-ecc} there exists $u \in C_1(v)$ such that $d(u,v) = \e(v)$. But then $d(u,c)=d(u,v)+d(v,c) \geq \rad(T) + 1 > \rad(T)$, which is a contradiction with the fact that $c$ is a central vertex.
\end{proof}

A direct consequence of Lemma \ref{lem:C1isAbove} is the following lemma.

\begin{lemma}\label{lem:C2equalsHeight}
    Let $T$ be a non-trivial tree rooted in a central vertex of $T$, say $c$. If $v\in V(T)$ is different from $c$, then the height of the subtree rooted in $v$ equals $\reach(C_2(v))$, if $v=c$ it equals $\reach(C_1(v))$.
\end{lemma}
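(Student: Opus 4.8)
The plan is to prove Lemma~\ref{lem:C2equalsHeight} by combining the structural information from Lemma~\ref{lem:C1isAbove} with the definitions of height and reach. Recall that the height of the subtree rooted in $v$ is the length of the longest path from $v$ to any of its descendants, and that by Lemma~\ref{lem:C1isAbove}, whenever $v \neq c$, the component $C_1(v)$ is precisely the one containing the parent of $v$. The key observation is that the descendants of $v$ are exactly the vertices lying in the components of $T - \{v\}$ that do \emph{not} contain the parent of $v$, i.e.\ the components $C_2(v), C_3(v), \ldots, C_{\deg(v)}(v)$ together with their incident structure below $v$.

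First I would handle the case $v \neq c$. I would argue that the height of the subtree rooted in $v$ equals $\max\{d(v,u) \mid u \text{ is a descendant of } v\}$, and that this maximum is achieved over exactly the union of the child-components $C_2(v), \ldots, C_{\deg(v)}(v)$. Since these components are ordered by nonincreasing reach, and $\reach(C_i(v)) = \max\{d(v,u) \mid u \in \overline{C_i(v)}\}$, the largest reach among the child-components is $\reach(C_2(v))$ (because $C_1(v)$, the parent component, is excluded from the descendant set). Thus the height of the subtree rooted in $v$ equals $\reach(C_2(v))$. The only subtlety here is ensuring $C_2(v)$ actually exists, i.e.\ that $\deg(v) \geq 2$; if $v$ is a leaf different from $c$, then $v$ has no descendants, the height of its subtree is $0$, and correspondingly there is no $C_2(v)$ so $\reach(C_2(v))$ should be read as $0$. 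I would note this degenerate case explicitly so the statement is consistent.

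For the case $v = c$, the root has no parent, so \emph{every} component of $T - \{c\}$ consists entirely of descendants of $c$. Hence the height of the subtree rooted in $c$ is $\max\{d(c,u) \mid u \in V(T), u \neq c\} = \max_i \reach(C_i(c)) = \reach(C_1(c))$, using the ordering convention from Definition~\ref{def:triod-max-eta}. This is exactly $\e(c) = \rad(T)$, which aligns with $c$ being central.

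The main obstacle, though a mild one, is bookkeeping the correspondence between "descendants of $v$" and "the non-parent components of $T - \{v\}$" cleanly, and making sure the reach function (which measures eccentricity of $v$ within $\overline{C_i(v)}$) translates exactly into the height (which measures distance from $v$ to the farthest descendant). Since both quantities are $\max\{d(v,u)\}$ over the same vertex set once the parent component is excluded, the identification is immediate; the proof is therefore essentially a matter of carefully invoking Lemma~\ref{lem:C1isAbove} to discard $C_1(v)$ in the non-root case and observing that no such exclusion occurs at the root.
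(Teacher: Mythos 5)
Your proposal is correct and follows exactly the route the paper intends: the paper states this lemma without a written proof, calling it ``a direct consequence of Lemma~\ref{lem:C1isAbove},'' and your argument simply fills in that consequence --- identifying the descendants of $v$ with the non-parent components of $T-\{v\}$, discarding $C_1(v)$ via Lemma~\ref{lem:C1isAbove} when $v\neq c$, and matching the height with the maximal remaining reach. Your explicit handling of the leaf case ($\deg(v)=1$, where $\reach(C_2(v))$ is read as $0$) is a sensible clarification consistent with the paper's conventions (cf.\ the initialisation $R_2(v)=0$ in Algorithm~\ref{alg:spanT}).
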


Now, we are ready to present an algorithm that computes the strong vertex span of a tree $T$. First, we present a simple depth-first-search (DFS) traversal algorithm that correctly returns the height of the subtree rooted in $v$ of the rooted tree $(T,c)$, where $c$ is a central vertex. This algorithm is presented in Algorithm \ref{alg:height}. Its correctness and complexity is proved in Theorem \ref{thm:algHeight}.

\begin{algorithm}[!ht]
\caption{height($T, c, v$)}
\label{alg:height}
\KwIn{a tree $T$ rooted in a central vertex $c$, a vertex $v$ of the tree $T$ }
\KwOut{height of the subtree rooted in $v$}

label $v$ as \emph{visited}

\tcc{Note, $R_1(v), R_2(v)$ and $R_3(v)$ are global variables initialized before calling this algorithm.}

\ForEach{$u \in N(v)$}
{
\If{$u$ is \emph{not visited}}
{
    h = height(T, c, u)
    
    \If{$h+1 > R_1(v)$}{    \label{line:top3-start}
        $R_3(v) = R_2(v)$
        
        $R_2(v) = R_1(v)$
        
        $R_1(v) = h + 1$
    }
    \ElseIf{$h+1 > R_2(v)$}{
        $R_3(v) = R_2(v)$
        
        $R_2(v)=h+1$
    }
    \ElseIf{$h+1>R_3(v)$}{
        $R_3(v)=h+1$
    }                       \label{line:top3-end}
}
}
\If{ $v\not= c$ }{
\KwRet{$R_2(v)$}
}
\Else{
\KwRet{$R_1(v)$}
}
\end{algorithm}

\begin{theorem}\label{thm:algHeight}
    Given a rooted tree $T$ on $n$ vertices, its root $c$ and a vertex $v \in V(T)$ Algorithm \ref{alg:height} returns the height of the subtree rooted in $v$ of the rooted tree $T$ in $\mathcal{O}(n)$ time. Moreover, for all vertices $v$ of $T$ it also determines $\reach(C_2(v))$ and $\reach(C_3(v))$.
\end{theorem}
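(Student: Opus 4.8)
The plan is to prove the two assertions of the theorem separately: first the correctness of the returned height value and the simultaneous computation of $\reach(C_2(v))$ and $\reach(C_3(v))$, and second the linear running time. I would proceed by a single structural induction on the subtree rooted in $v$, exploiting the fact that Algorithm~\ref{alg:height} is a standard post-order DFS: by the time the \texttt{ForEach} loop at $v$ finishes, the recursive calls have already correctly computed the heights of all subtrees rooted at the children of $v$.

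For the correctness of the height value, I would first argue that the DFS visits each child $u$ of $v$ exactly once (using the \emph{visited} labels and the tree structure, so that the only unvisited neighbours encountered are the children) and that the running update of $R_1(v), R_2(v), R_3(v)$ through lines~\ref{line:top3-start}--\ref{line:top3-end} maintains the invariant that after processing all children, $R_1(v) \geq R_2(v) \geq R_3(v)$ are the three largest values among $\{h(u)+1 \mid u \text{ a child of } v\}$, padded with zeros if $v$ has fewer than three children. Here $h(u)$ denotes the height of the subtree rooted in $u$, available by the induction hypothesis. Since the height of the subtree rooted in $v$ equals $\max\{h(u)+1 \mid u \text{ a child of } v\}$ (or $0$ if $v$ is a leaf), this is exactly $R_1(v)$, so returning $R_1(c)$ at the root is correct. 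The subtlety is the non-root return value: for $v \neq c$ the algorithm returns $R_2(v)$ rather than $R_1(v)$, and I would explain this discrepancy as a deliberate reinterpretation. Concretely, each $R_i(v)$ is meant to capture $\reach(C_i(v))$, and by Lemma~\ref{lem:C1isAbove} the component $C_1(v)$ for $v \neq c$ is the one containing the \emph{parent} of $v$, whose reach is not seen by the subtree DFS. Thus for $v \neq c$ the three child-subtree values $R_1(v), R_2(v), R_3(v)$ computed so far actually correspond to $\reach(C_2(v)), \reach(C_3(v)), \reach(C_4(v))$, and the quantity the recursion must pass \emph{up} to the parent is the height of the subtree rooted in $v$, which by Lemma~\ref{lem:C2equalsHeight} equals $\reach(C_2(v))$; returning $R_2(v)$ after the shift-in of the parent's contribution realises this. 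This bookkeeping identification is where I expect the main obstacle to lie, and I would handle it by carefully distinguishing, for each $v$, the values \emph{held in} $R_1,R_2,R_3$ at the moment the DFS call for $v$ returns versus their final values after the parent's recursive call has updated them; the cleanest statement is that after the entire traversal completes, $R_1(v)=\reach(C_1(v))$, $R_2(v)=\reach(C_2(v))$, $R_3(v)=\reach(C_3(v))=\eta(v)$ for every vertex $v$, invoking Lemmas~\ref{lem:C1isAbove} and~\ref{lem:C2equalsHeight} to align the sorted-by-reach ordering of Definition~\ref{def:triod-max-eta} with the parent/child orientation.

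For the running time, the argument is the familiar DFS accounting: each vertex is labelled \emph{visited} once, and the total work across all recursive calls is proportional to $\sum_{v} \deg(v) = 2|E(T)| = 2(n-1) = \mathcal{O}(n)$, since each edge is traversed a constant number of times in the \texttt{ForEach} loops and each of the constant-time comparison/update blocks is executed $O(1)$ times per edge. I would conclude by assembling these pieces into the theorem statement, noting that the $\reach(C_2(v))$ and $\reach(C_3(v))$ values for every $v$ are obtained as a by-product of the same single traversal, so that no asymptotic overhead is incurred beyond the $\mathcal{O}(n)$ height computation.
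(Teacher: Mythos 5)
Your running-time argument is fine and matches the paper's, but the correctness half of your proof rests on a reading of the algorithm that contradicts what it actually does, and the gap is exactly the point the paper's proof turns on. You assert the invariant that after the \textbf{foreach} loop at $v$ the registers $R_1(v)\geq R_2(v)\geq R_3(v)$ hold the three largest values of $\{h(u)+1 \mid u \text{ a child of } v\}$, padded with zeros. That is false for every $v\neq c$: Algorithm \ref{alg:spanT} (lines \ref{line:init-start}--\ref{line:init-end}) initialises $R_1(v)=\rad(T)$ for $v\neq c$, and only $R_1(c)=0$ at the root. This asymmetric initialisation is the whole mechanism of Algorithm \ref{alg:height}: when the tree is rooted at a central vertex $c$ and $v\neq c$, every component of $T-\{v\}$ not containing the parent of $v$ has reach at most $\rad(T)-1$ (otherwise a vertex realising that reach would be at distance greater than $\rad(T)$ from $c$, contradicting centrality), so the test $h+1>R_1(v)$ in line \ref{line:top3-start} never fires. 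Thus $R_1(v)$ acts as a sentinel that is never overwritten, the two largest child reaches land in $R_2(v)$ and $R_3(v)$, and by Lemma \ref{lem:C1isAbove} these are exactly $\reach(C_2(v))$ and $\reach(C_3(v))$; returning $R_2(v)$ is then correct by Lemma \ref{lem:C2equalsHeight}. Your write-up never states or proves the key inequality (child reaches $<\rad(T)$), which is the heart of the correctness argument.

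Your attempt to reconcile the return value $R_2(v)$ instead invokes a ``shift-in of the parent's contribution'' into $v$'s registers, but no such operation exists: the registers of $v$ are written only during the call height($T,c,v$) itself, by the return values of $v$'s children, and are never touched after that call returns (the parent's loop updates the parent's registers, not $v$'s). Under your reading, with all registers starting at $0$, returning $R_2(v)$ would yield the \emph{second} largest child reach, i.e.\ $\reach(C_3(v))$, so the algorithm would simply be incorrect, and the ``moreover'' claim ($R_3(v)=\reach(C_3(v))=\eta(v)$, the quantity Algorithm \ref{alg:spanT} later maximises) would fail too. Your closing claim that after the traversal $R_1(v)=\reach(C_1(v))$ for every $v$ is also false in general: for $v\neq c$ the register $R_1(v)$ stays equal to $\rad(T)$, whereas $\reach(C_1(v))=\e(v)$, which can be strictly larger. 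The theorem needs, and the paper proves, only that $R_2(v)$ and $R_3(v)$ end up correct; $R_1(v)$ is merely a lower bound on $\reach(C_1(v))$ used as a guard.
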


\begin{proof}
    Note, $R_1(v), R_2(v)$ and $R_3(v)$ are global variables initialised before calling this algorithm in Algorithm \ref{alg:spanT}.  We use a DFS traversal of the given tree $T$ rooted in a central vertex $c$. For each vertex $v$ we store the two largest reaches amongst the components of $T-\{v\}$ without taking into consideration $C_1(v)$ (notations are used as in Definition \ref{def:triod-max-eta}), i.e. $\reach(C_2(v))$ and $\reach(C_3(v))$ are stored in the values called $R_2(v)$ and $R_3(v)$, respectively. By Lemma \ref{lem:C1isAbove} if $v\not= c$ both $C_2(v)$ and $C_3(v)$ are components rooted in a child of $v$. Lines \ref{line:top3-start} -- \ref{line:top3-end} of Algorithm \ref{alg:height} update these values after receiving the height of the subtree rooted in each child node. The value $R_1(v)$ is used as a guard value, namely if $v=c$ it equals the reach of $C_1(c)$, i.e. $R_1(c)=\reach(C_1(c))=\rad(T)$, by Lemma \ref{lemma:c1-ecc} and the fact that $c$ is a central vertex. If $v\not= c$, then $\e(v)\geq\rad(T)$ and $R_1(v)$ is set to $\rad(T)$ in the initialisation in the base Algorithm \ref{alg:spanT}. Since every subtree (component) of $T-\{v\}$ that does not contain a parent of $v$ has the reach smaller than the radius of $T$ (otherwise $c$ would not be a central vertex), we can use this bound to correctly compute $R_2(v)$ and $R_3(v)$. After the {\bf foreach} loop is finished, the values $R_2(v)$ and $R_3(v)$ equal $\reach(C_2(v))$ and $\reach(C_3(v))$, respectively. 
    If $v\not=c$ then by Lemma \ref{lem:C2equalsHeight} the height of the subtree equals $\reach(C_2(v))$, otherwise it equals $\reach(C_1(v))$. By Lemma \ref{lem:C2equalsHeight} the correct result is returned. 
    
    Since DFS is well known to be linear \cite{Va02} in the size of the graph and our graph is a tree, the proof is concluded.
\end{proof}

\begin{algorithm}[!ht]
\caption{$\svSpan{T}$}
\label{alg:spanT}
\KwIn{a tree $T$}
\KwOut{strong vertex span of $T$}

\If{$|V(T)|=1$} {           \label{line:trivial-start}
    \KwRet{0}
} \ElseIf{$T$ is a path}{
    \KwRet{1}               \label{line:trivial-end}
}

compute the center of $T$ and $\rad(T)$ \label{line:center-radius}

choose $c\in center(T)$

\ForEach{$v \in V(T)$}{ \label{line:init-start}
    label $v$ as not visited

    $R_1(v) =  
    \begin{cases}
        \rad(T), & v \not= c \\
        0, & v = c
    \end{cases}
    $

    $R_2(v) = 0$
    
    $R_3(v) = 0$ \label{line:init-end}
}

height($T, c, c$) \label{line:dfs-height}

$span = 0$      \label{line:find-max-r3-start}

\ForEach{$v \in V(T)$}
{
    \If{$R_3(v)>span$}
    {
        $span = R_3(v)$  \label{line:find-max-r3-end}
    }
}
\KwRet{$span$}
\end{algorithm}

The main algorithm presented in Algorithm \ref{alg:spanT} is used to compute the strong vertex span of an arbitrary tree $T$. If $T$ is the one-vertex graph or a path, then by Corollary \ref{spanOfTrees} the strong vertex span equals 0 or 1, respectively. Checking if a tree is a path can clearly be done in linear time. If $T$ is not a path, then it contains a vertex of degree at least 3. Again, by Corollary \ref{spanOfTrees}, we need to find a vertex $v$ of degree at least three with $\eta(v)=\mathfrak{H}(T)$. Algorithm \ref{alg:spanT} presents how to find such a vertex using the auxiliary Algorithm \ref{alg:height}. 

\begin{theorem}\label{thm:algSpan}
    Algorithm \ref{alg:spanT} returns the strong vertex span of a given tree $T$ on $n$ vertices in $\mathcal{O}(n)$ time. 
\end{theorem}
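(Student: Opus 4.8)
The plan is to prove correctness by reducing to Corollary \ref{spanOfTrees} and Theorem \ref{thm:algHeight}, and to establish the running time by checking that each phase of Algorithm \ref{alg:spanT} is linear. First I would dispatch the easy cases handled on lines \ref{line:trivial-start}--\ref{line:trivial-end}: the algorithm returns $0$ when $|V(T)|=1$ and $1$ when $T$ is a non-trivial path, which by Corollary \ref{spanOfTrees} are exactly $\svSpan{T}$. Deciding whether $T$ is a path is linear (e.g.\ by checking that every vertex has degree at most $2$). It then remains to treat the case where $T$ is not a path, where Corollary \ref{spanOfTrees} gives $\svSpan{T}=\mathfrak{H}(T)=\max\{\eta(v)\mid v\in V(T)\}$, so I must show that the value returned by the final loop equals $\mathfrak{H}(T)$.

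For correctness in the non-path case I would prove that after the call height$(T,c,c)$ on line \ref{line:dfs-height} the array $R_3$ satisfies $R_3(v)=\eta(v)$ for every $v\in V(T)$. By Theorem \ref{thm:algHeight}, the single DFS traversal correctly stores $\reach(C_2(v))$ and $\reach(C_3(v))$ in $R_2(v)$ and $R_3(v)$ for every vertex $v$, using the initialisation of lines \ref{line:init-start}--\ref{line:init-end} together with the guard value $R_1(v)=\rad(T)$, which stands in for the reach of the parent component $C_1(v)$ (the component of largest reach, by Lemma \ref{lem:C1isAbove}). I then split on $\deg(v)$: if $\deg(v)\geq 3$, then $R_3(v)=\reach(C_3(v))=\eta(v)$ by the definition of $\eta$; if $\deg(v)\leq 2$, then $T-\{v\}$ has at most two components, so the innermost update block (lines \ref{line:top3-start}--\ref{line:top3-end}) never writes the third slot and $R_3(v)$ retains its initial value $0=\eta(v)$. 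Hence $R_3(v)=\eta(v)$ in all cases, and the loop on lines \ref{line:find-max-r3-start}--\ref{line:find-max-r3-end} returns
\[
\max_{v\in V(T)} R_3(v) \;=\; \max_{v\in V(T)} \eta(v) \;=\; \mathfrak{H}(T) \;=\; \svSpan{T}.
\]

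For the running time I would sum the cost of each phase. The path test is $\mathcal{O}(n)$; computing $\cen(T)$ and $\rad(T)$ on line \ref{line:center-radius} is $\mathcal{O}(n)$ via two BFS traversals, as noted just before Lemma \ref{lem:centersOnPath}; the initialisation loop on lines \ref{line:init-start}--\ref{line:init-end} is $\mathcal{O}(n)$; the call height$(T,c,c)$ is $\mathcal{O}(n)$ by Theorem \ref{thm:algHeight}; and the final maximisation loop is $\mathcal{O}(n)$. Adding these gives total running time $\mathcal{O}(n)$.

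The main subtlety I expect is the bookkeeping for low-degree vertices: $\eta(v)$ is defined as $\reach(C_3(v))$ only when $\deg(v)\geq 3$ and as $0$ otherwise, whereas the algorithm computes $R_3(v)$ uniformly. The crux is therefore to confirm that when $\deg(v)\leq 2$ the third slot is genuinely never written, so that the convention $\eta(v)=0$ is matched automatically. The heavier part---that the guarded DFS ranks the component reaches correctly---is already delivered by Theorem \ref{thm:algHeight}, so this final theorem is essentially an assembly of that result with the span formula of Corollary \ref{spanOfTrees}.
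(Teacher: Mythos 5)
Your proof is correct and takes essentially the same approach as the paper's: dispatch the trivial cases via Corollary \ref{spanOfTrees}, rely on Theorem \ref{thm:algHeight} for correctness of the DFS phase so that the final loop returns $\max_{v}\eta(v)=\mathfrak{H}(T)=\svSpan{T}$, and sum the linear-time phases. Your explicit degree split showing $R_3(v)=0=\eta(v)$ when $\deg(v)\leq 2$ is in fact slightly more careful than the paper, which simply asserts $R_3(v)=\reach(C_3(v))=\eta(v)$ without separating out the low-degree vertices.
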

\begin{proof}
   If $T$ is a path (lines \ref{line:trivial-start} -- \ref{line:trivial-end}), the correctness follows from Corollary \ref{spanOfTrees}. Checking that a tree is a path can clearly be done in linear time. 
   
   Assuming $T$ is not a path, computing the center and the radius of $T$ (line \ref{line:center-radius}) can also be done in linear time. 
   
   The first {\bf foreach} loop (lines \ref{line:init-start} -- \ref{line:init-end}) initialises for each vertex $v$ some global variables needed also in Algorithm \ref{alg:height}, namely: the label of $v$ to not visited, the values $R_1(v), R_2(v)$ and $R_3(v)$ needed to compute bounds or values for the reaches $\reach(C_1(v)), \reach(C_2(v))$ and $\reach(C_3(v))$, respectively. Note, by definition it follows that $R_3(v) = \reach(C_3(v)) = \eta(v)$. The value $R_1(v)$ is set to $\rad(T)$, which is the lower bound for $\reach(C_1(v))$, when $v\not= c$, and equals $\rad(T)$ for $v=c$. Therefore in this case, the starting value is set to 0, so that the height of all subtrees are computed and the three largest ones are stored. This is clearly done in linear time. 
   
   Next, in line \ref{line:dfs-height} the computation of these values is executed by rooting $T$ in the chosen central vertex $c$ by calling Algorithm \ref{alg:height} starting in $c$. By Theorem \ref{thm:algHeight} this is also done in linear time. 
   
   Finally, in the last loop (lines \ref{line:find-max-r3-start} -- \ref{line:find-max-r3-end}) the vertex $v$ with the largest $\eta(v)$ is determined, hence by Corollary \ref{spanOfTrees} and correctness of Algorithm \ref{alg:height} the strong vertex span of $T$ is returned in linear time. 
\end{proof}

Theorem \ref{thm:algSpan} vastly improves the general algorithm for computing the strong vertex span of a graph with the time complexity $\mathcal{O}(n^4)$, see \cite{BaTa23}, to linear time in the case for trees. The following two problems are interesting for further research.

\begin{problem}
    Can the algorithm presented in \cite{BaTa23} for computing the strong vertex span (or other variants) be improved in complexity for the general case?
\end{problem}

\begin{problem}
    For which other families of graphs can the strong vertex span be determined in linear time?
\end{problem}

\section*{Statements and Declarations}

\subsection*{Funding}
Mateja Gra\v si\v c acknowledges the financial support from the Slovenian Research and Innovation Agency (research core funding No. P1-0288). Andrej Taranenko acknowledges the financial support from the Slovenian Research and Innovation Agency (research core funding No. P1-0297 and project N1-0285). All authors acknowledge the financial support from the Slovenian Research and Innovation Agency (project BI-US/22-24-121).



\begin{thebibliography}{99}
\bibitem{BaTa23}
I.~Banič, A.~Taranenko, Span of a Graph: Keeping the Safety Distance, Discrete Mathematics \& Theoretical Computer Science 25:1 (2023) 

\bibitem{Erceg23} Erceg G., Šubašić A., Vojković T. (2023). Some results on the maximal safety distance in a graph. FILOMAT, 37(15), 5123–5136.

\bibitem{Lelek} A.~Lelek, Disjoint mappings and the span of spaces, Fund. Math. 55 (1964), 199 -- 214.

\bibitem{SuVo24} A. Šubašić, T. Vojković, Vertex Spans of Multilayered Cycle and Path Graphs. Axioms 13 (2024) 236. 

\bibitem{Va02} G. Valiente, Algorithms on Trees and Graphs, Springer-Verlag Berlin Heidelberg, 2002.

\end{thebibliography}
\end{document}